\newtheorem{theorem}{Theorem}[section]
\newtheorem{proposition}[theorem]{Proposition}
\newtheorem{remark}[theorem]{Remark}
\newtheorem{definition}[theorem]{Definition}
\newtheorem{corollary}[theorem]{Corollary}
\newtheorem{lem}[theorem]{Lemma}
\begin{document}

\begin{flushleft}
Prasadini Mahapatra \footnote{\textbf{Prasadini Mahapatra}, Department of Mathematics, National Institute of Technology Rourkela, Odisha, India} and Divya Singh \footnote{\textbf{Corresponding Author: Divya Singh}, Department of Mathematics, National Institute of Technology Rourkela, Odisha, India}
\end{flushleft}

\begin{flushleft}
\textbf{\Large Properties of translates of frames on Vilenkin group}\\[1 cm]
\end{flushleft}

\noindent \textbf{Abstract:} 
We study the properties based on the space generated by the translates of square
integrable function using C-bracket on Vilenkin group. We give the necessary and
sufficient condition for frame sequences by the family of translates of the function.

\noindent \textbf{Keywords:} Translates of frames, square integrable function, Vilenkin Group

\noindent \textbf{MSC 2010:} 42C40, 42C15

\section{Introduction}\label{sec1}
In recent years several works related to generalizations and extensions of wavelets have been introduced. Some fundamental ideas from wavelet theory, such as multiresolution analysis (MRA), have appeared in very different contexts. The authors like Schipp, Wade, Simon, Golubov, Efimov, Skvortsov, Maqusi, Siddiqi, Beauchamp developed the theory related to Walsh analysis. In 1923, J. Walsh introduced Walsh functions as a linear combination of Haar functions. In the 1940s Gelfand recognized that Walsh functions are identified with characters of the Cantor dyadic group. There is a lot of work done on locally compact abelian group. For every locally compact abelian group, there exists a Haar measure, which is not identically zero but unique up to a multiplicative constant. N. J. Fine and N. Ya Vilenkin independently determined that the Walsh system is the group of characters of the Cantor dyadic group. A large class of locally compact abelian groups was introduced by  N. Ya Vilenkin, called Vilenkin groups. Cantor dyadic group is its particular case.
Lang \cite{lang1} determined the compactly supported orthogonal wavelets on the locally compact Cantor dyadic group. Shift operators such as dilation operator, multiresolution analysis(MRA) were built and some necessary regularity conditions for wavelets and sufficient conditions on scaling filters were given to occur orthonormal wavelets. The generalised Walsh functions form an orthonormal system in the Vilenkin group $G$ and mask of refinable equation is given in terms of these generalised Walsh functions. Refinable equation gives refinable function which generates MRA and hence wavelets, if the mask satisfies certain conditions. Necessary and sufficient conditions were given over the mask of scaling function $\phi$ in terms of modified Cohen's condition and blocked sets such that $\phi$ generates an MRA. Lang \cite{lang2} examined that the necessary and sufficient conditions were given on a trigonometric polynomial scaling filter resulting in a multiresolution analysis.

In the last decade, several authors such as Aldroubi, Benedetto, Bownik, de Boor, DeVore, Li, Ron, Rzeszotnik, Shen, Weiss and Wilson, Behera have been studied the shift invariant subspaces for $L^{2}$ space of $\mathbb{R}^{d}$ by different aspects. This theory plays an important role in many areas, especially in the theory of wavelets, and multiresolution
analysis. Shift-invariant spaces are very important in applications and the theory had a great development in the last twenty years, mainly in approximation theory, sampling, wavelets, and frames. In particular, they serve as models in many problems in signal and image processing. The concept of frames was first introduced in 1952 by R. Duffin and A. C. Schaefer in their work on nonharmonic analysis. 

This paper is organized as follows. It consists of four main sections. In Section 2, we study the properties on C-bracket and generating space on the Vilenkin group. Section 3 contains the characterization of translates of frames and section 4 consists of the characterization of generalized dual frames.

\section{Notation and Preliminaries}\label{sec2}

The Vilenkin group $ G $ is defined as the group of sequences
$$
x=(x_{j})=(..., 0, 0, x_{k}, x_{k+1}, x_{k+2},...),
$$
where $x_{j}\in \lbrace 0, 1, ..., p-1\rbrace $ for $ j \in \mathbb{Z}$ and $ x_{j}=0 $, for $ j < k = k(x)$. The group operation on $ G $, denoted by $ \oplus $, is defined as coordinatewise addition modulo $ p $:
$$
(z_{j})=(x_{j})\oplus (y_{j})\Leftrightarrow z_{j}=x_{j} + y_{j}(\text{mod}\: p), \text{ for } j \in \mathbb{Z}.
$$
Let
$$
U_{l}=\lbrace(x_{j}) \in G : x_{j}=0\: \text{for}\: j\leq l\rbrace ,\qquad l\in \mathbb{Z},
$$
be a system of neighbourhoods of zero in $G$. In case of topological groups if we know neighbourhood system $\lbrace U_l \rbrace_{l \in \mathbb Z}$ of zero, then we can determine neighbourhood system of every point $x=(x_j) \in G$ given by $\lbrace U_l \oplus x \rbrace_{l \in \mathbb Z}$, which in turn generates a topology on $G$.

Let $ U=U_{0} $ and $ \ominus $ denotes the inverse operation of $ \oplus $. The Lebesgue spaces $ L^{q}(G),\: 1\leq q\leq \infty $, are defined by the Haar measure $ \mu $ on Borel's subsets of $ G $ normalized by $ \mu(U)=1 $.

The group dual to $ G $ is denoted by $ G^{*} $ and consists of all sequences of the form
$$
\omega=(\omega_{j})=(...,0,0,\omega_{k},\omega_{k+1},\omega_{k+2},...),
$$
where $ \omega_{j} \in \lbrace 0,1,...,p-1\rbrace $, for $ j \in \mathbb{Z} $ and $ \omega_j=0 $, for $ j<k=k(\omega) $. The operations of addition and subtraction, the neighbourhoods $\lbrace U_{l}^{*}\rbrace $ and the Haar measure $ \mu^{*} $ for $ G^{*} $ are defined as above for $ G $. Each character on $ G $ is defined as
$$
\chi(x,\omega)=\exp\bigg(\frac{2\pi i}{p}\sum_{j\in \mathbb{Z}}{x_{j}w_{1-j}}\bigg),\quad x \in G,
$$
for some $ \omega \in G^{*} $.

Let $ H = \lbrace (x_{j}) \in G\: |\: x_{j} =0 \; \text{ for } j>0\rbrace $ be a discrete subgroup in $ G $  and $A$ be an automorphism on $G$ defined by $ (Ax)_j=x_{j+1} $, for $x=(x_j) \in G$. From the definition of annihilator and above definition of character $\chi$, it follows that the annihilator $ H^{\perp} $ of the subgroup $ H $ consists of all sequences $ (\omega_j) \in G^{*} $ which satisfy $ \omega_j=0 $ for $ j>0 $.

Let $ \lambda:G\longrightarrow \mathbb{R}_+ $ be defined by
\begin{equation*}
\lambda(x)=\sum_{j \in \mathbb{Z}}{x_{j} p^{-j}}, \qquad x=(x_j) \in G.
\end{equation*}
It is obvious that the image of $ H $ under $ \lambda $ is the set of non-negative integers $ \mathbb{Z}_{+} $. For every $ \alpha \in \mathbb{Z}_{+} $, let $ h_{[\alpha]} $ denote the element of $ H $ such that $ \lambda(h_{[\alpha]})=\alpha $. For $ G^{*} $, the map $ \lambda^{*}:G^{*}\longrightarrow \mathbb{R}_{+} $, the automorphism $ B \in \text{Aut } G^{*} $, the subgroup $ U^{*} $ and the elements $ \omega_{[\alpha]} $ of $ H^{\perp} $ are defined similar to $ \lambda$, $A$, $U$ and $h_{[\alpha]}$, respectively. 

The generalised Walsh functions for $ G $ are defined by
\begin{equation*}
W_\alpha(x)=\chi(x,\omega_{[\alpha]}),\qquad \alpha \in \mathbb{Z}_+, x \in G.
\end{equation*}
These functions form an orthogonal set for $L^2(U)$, that is,
\begin{equation*}
\int_{U}{W_{\alpha}(x) \overline{W_{\beta}(x)}d\mu(x)}=\delta_{\alpha,\beta},\qquad \alpha,\beta \in \mathbb{Z}_{+},
\end{equation*}
where $ \delta_{\alpha,\beta} $ is the Kronecker delta. The system $ {W_{\alpha}} $ is complete in $ L^{2}(U) $. The corresponding system for $ G^{*} $ is defined by
\begin{equation*}
W_{\alpha}^{*}(\omega)=\chi(h_{[\alpha]},\omega),\qquad \alpha \in \mathbb{Z}_{+}, \omega \in G^{*}.
\end{equation*}
The system $\lbrace W_{\alpha}^{*} \rbrace$ is an orthonormal basis of $ L^{2}(U^{*}) $.

For positive integers $ n $ and $ \alpha $,
\begin{equation*}
U_{n,\alpha}=A^{-n}(h_{[\alpha]}) \oplus A^{-n}(U).
\end{equation*}

\begin{definition}
A sequence $\{f_{j}:j\in J\}$ in a Hilbert space $H$ is called a Bessel sequence if there exists $B>0$ such that 
$$
\sum_{j \in J} \vert \langle f,f_{j} \rangle \vert ^{2}\leq B \Vert f \Vert^{2}, \quad \forall f \in H.
$$
\end{definition}

\begin{definition}
A sequence $\{f_{j}:j\in J\}$ in a Hilbert space $H$ is called a \emph{frame} if there exist constants $a$ and $b$, $0 <a \leq b < \infty$, such that
$$
a \Vert f \Vert ^{2} \leq \sum_{j \in J} \vert \langle f,f_{j} \rangle \vert ^{2}\leq b \Vert f \Vert^{2}, \quad \forall f \in H.
$$ 
\end{definition}
If $a=b$, then $(f_j)$ is called a \emph{tight frame}, and it is called a \emph{Parseval frame} if $a=b=1$.

\section{Translates of frames}\label{sec3}
The principal shift invariant space generated by $\psi$ is denoted as $\langle \psi \rangle$, defined by $\langle \psi \rangle=\overline{\text{span}}\{\psi(. \ominus h):h \in H\}$.  
 
In general, a closed subspace $V \subseteq L^{2}(G)$ is called shift invariant if and only if $T_{h}V \subset V$, for all $h \in H$. 
For $\psi \in L^2(G)$, the periodization function is defined by 
\begin{eqnarray} 
P_{\psi}(\omega)=\sum_{h \in H^{\bot}} \vert \hat{\psi}(\omega \oplus h) \vert ^{2}.
\end{eqnarray}

For $f,g \in L^{2}(G)$, the \emph{$C$-bracket} is defined as
\begin{equation}\label{110}
[f,g](x):=\sum_{h \in H} f(x \oplus h)\overline{g(x \oplus h)}.
\end{equation}

Clearly, $P_{\psi}=[\hat{\psi},\hat{\psi}]$.

Define the map $J_{\psi}:M_{\psi} \rightarrow L^{2}(G)$ by $J_{\psi}m=(m\hat{\psi})^{\check{}}$, for $m \in M_{\psi}$. Then we have 

\begin{theorem}(\cite{kg1})
$J_{\psi}$ is an isometry between $M_{\psi}$ and $\langle \psi \rangle$.	
\end{theorem}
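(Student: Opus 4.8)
The plan is to transport the whole statement to the Fourier side via the Plancherel theorem and reduce it to a single periodization identity for the $C$-bracket. Since the Fourier transform is a unitary map of $L^{2}(G)$ onto $L^{2}(G^{*})$, the operator $J_{\psi}$ will be an isometry onto $\langle\psi\rangle$ exactly when $m\mapsto m\hat{\psi}$ is well defined on $M_{\psi}$, is norm-preserving, and has range equal to the Fourier image of $\langle\psi\rangle$.

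First I would check that $J_{\psi}$ is well defined and isometric. For an $H^{\perp}$-periodic $m\in M_{\psi}$ I would split $G^{*}$ into the cosets $\omega\oplus h$, $h\in H^{\perp}$, with $\omega$ running over the fundamental domain $U^{*}$, and compute, using the periodicity of $m$, Tonelli's theorem (the integrand is nonnegative), and $P_{\psi}=[\hat{\psi},\hat{\psi}]$,
$$
\|m\hat{\psi}\|_{L^{2}(G^{*})}^{2}=\int_{U^{*}}|m(\omega)|^{2}\sum_{h\in H^{\perp}}|\hat{\psi}(\omega\oplus h)|^{2}\,d\mu^{*}(\omega)=\int_{U^{*}}|m(\omega)|^{2}P_{\psi}(\omega)\,d\mu^{*}(\omega)=\|m\|_{M_{\psi}}^{2}<\infty.
$$
Hence $m\hat{\psi}\in L^{2}(G^{*})$, so $J_{\psi}m=(m\hat{\psi})^{\check{}}$ makes sense, and $\|J_{\psi}m\|=\|m\hat{\psi}\|_{L^{2}(G^{*})}=\|m\|_{M_{\psi}}$ by Plancherel. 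In particular $J_{\psi}$ is a linear isometry, and since $M_{\psi}$ is complete its range is a closed subspace of $L^{2}(G)$.

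Next I would identify that range with $\langle\psi\rangle$. For each $h\in H$ let $e_{h}$ be the (bounded, $H^{\perp}$-periodic) function with $\widehat{\psi(\cdot\ominus h)}=e_{h}\hat{\psi}$ — a character of $H$ realized on $G^{*}/H^{\perp}$; then $e_{h}\in M_{\psi}$ and $J_{\psi}e_{h}=\psi(\cdot\ominus h)$. Thus the closed subspace $J_{\psi}(M_{\psi})$ contains every $\psi(\cdot\ominus h)$, so $\langle\psi\rangle=\overline{\text{span}}\{\psi(\cdot\ominus h):h\in H\}\subseteq J_{\psi}(M_{\psi})$. Conversely, $J_{\psi}$ maps any finite combination $\sum_{h}c_{h}e_{h}$ to $\sum_{h}c_{h}\,\psi(\cdot\ominus h)\in\langle\psi\rangle$, so if such trigonometric polynomials are dense in $M_{\psi}$ then continuity of $J_{\psi}$ yields $J_{\psi}(M_{\psi})\subseteq\langle\psi\rangle$; combined with the previous inclusion this gives $J_{\psi}(M_{\psi})=\langle\psi\rangle$, completing the proof.

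The hard part will be the density of the trigonometric polynomials $\sum_{h}c_{h}e_{h}$ in $M_{\psi}$. The approach I would take is: $P_{\psi}\,d\mu^{*}$ is a \emph{finite} Borel measure on $U^{*}$, since $\int_{U^{*}}P_{\psi}\,d\mu^{*}=\|\hat{\psi}\|_{L^{2}(G^{*})}^{2}=\|\psi\|^{2}<\infty$; the family $\{e_{h}:h\in H\}$ — essentially the generalized Walsh system $\{W_{\alpha}^{*}\}$, already known to be an orthonormal basis of $L^{2}(U^{*})$ — is closed under multiplication and complex conjugation, contains the constants, and separates the points of the compact group $G^{*}/H^{\perp}$ realized on $U^{*}$, so the complex Stone--Weierstrass theorem makes its linear span uniformly dense in $C(U^{*})$; and $C(U^{*})$ is dense in $L^{2}$ of any finite Borel measure, in particular in $M_{\psi}$. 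One should also note that on the set where $P_{\psi}$ vanishes every member of $M_{\psi}$ represents the zero class, so the zeros of $P_{\psi}$ cause no difficulty. All remaining points — linearity of $J_{\psi}$, the coset decomposition $G^{*}=\bigsqcup_{h\in H^{\perp}}(U^{*}\oplus h)$, and the relation $\widehat{\psi(\cdot\ominus h)}=e_{h}\hat{\psi}$ — are routine.
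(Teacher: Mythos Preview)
Your argument is correct: the periodization computation gives the isometry, the image is closed because $M_{\psi}$ is complete, and Stone--Weierstrass on the compact quotient $G^{*}/H^{\perp}$ together with density of $C(U^{*})$ in $L^{2}(P_{\psi}\,d\mu^{*})$ yields the surjectivity onto $\langle\psi\rangle$. There is, however, nothing to compare against in this paper: the theorem is stated with a citation to \cite{kg1} and no proof is supplied here, so the authors defer entirely to that reference.

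One remark on your write-up: for the inclusion $J_{\psi}(M_{\psi})\subseteq\langle\psi\rangle$ you can bypass Stone--Weierstrass by an orthogonality argument that stays closer to the tools already present in the paper. If $g=J_{\psi}m$ is orthogonal to every $T_{h}\psi$, then periodizing gives $\int_{U^{*}}m(\omega)P_{\psi}(\omega)\chi(h,\omega)\,d\mu^{*}(\omega)=0$ for all $h\in H$; since $mP_{\psi}\in L^{1}(U^{*})$ and the system $\{W_{\alpha}^{*}\}$ is complete, $mP_{\psi}=0$ a.e., hence $m=0$ in $M_{\psi}$ and $g=0$. This shows $J_{\psi}(M_{\psi})\ominus\langle\psi\rangle=\{0\}$, so $J_{\psi}(M_{\psi})=\langle\psi\rangle$. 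Both routes are valid; yours is more self-contained topologically, while the orthogonality route uses only the completeness of the Walsh system already recorded in Section~\ref{sec2}.
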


\begin{theorem}(\cite{kg1})\label{charPSI1}
Let $\phi \in L^2(G)$. Then $f \in \langle \phi \rangle$ iff $\hat{f}(\xi)=m(\xi)\hat{\phi}(\xi)$, for some $m \in M_{\psi}$.
\end{theorem}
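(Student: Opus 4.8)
The plan is to transfer the statement to the Fourier side and lean on the isometry $J_\phi\colon M_\phi\to\langle\phi\rangle$ from the previous theorem (with $\phi$ in the role of $\psi$) together with its defining identity $\widehat{J_\phi m}=m\hat\phi$. The key observation is that the Fourier transform on $G$ sends a translate $\phi(\cdot\ominus h)$, $h\in H$, to $\chi(h,\cdot)\,\hat\phi$ (up to the conventional conjugation of the character), and that for $h\in H$ the function $\chi(h,\cdot)$ is constant on the cosets of $H^\perp$ in $G^*$ because $H^\perp$ annihilates $H$; hence $\chi(h,\cdot)$ is a bounded $H^\perp$-periodic function and so belongs to $M_\phi$, which is $L^2$ of the fundamental domain $U^*$ of $G^*/H^\perp$ equipped with the finite measure $P_\phi\,d\mu^*$ (finite since $\int_{U^*}P_\phi\,d\mu^*=\|\phi\|^2_{L^2(G)}$ by periodization and Plancherel). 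Under $J_\phi$ the generator $\chi(h,\cdot)$ corresponds exactly to $\phi(\cdot\ominus h)$, and the identity $\|m\hat\phi\|^2_{L^2(G^*)}=\int_{U^*}|m|^2P_\phi\,d\mu^*=\|m\|^2_{M_\phi}$ is precisely what makes $J_\phi$ norm-preserving.

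For the implication ($\Leftarrow$) I would argue directly: if $\hat f=m\hat\phi$ with $m\in M_\phi$, the identity above shows $m\hat\phi\in L^2(G^*)$, so $f:=(m\hat\phi)^{\check{}}=J_\phi m$ is a well-defined element of $L^2(G)$ with $\hat f=m\hat\phi$, and since $J_\phi$ takes values in $\langle\phi\rangle$ we get $f\in\langle\phi\rangle$ at once. For ($\Rightarrow$), if one reads the previous theorem as saying that $J_\phi$ is \emph{onto} $\langle\phi\rangle$, then it is again immediate: $f=J_\phi m$ for some $m\in M_\phi$, so $\hat f=m\hat\phi$. To keep the argument self-contained I would instead start from a finite combination $f=\sum_j c_j\,\phi(\cdot\ominus h_j)$, for which $\hat f=m\hat\phi$ with the trigonometric polynomial $m=\sum_j c_j\,\chi(h_j,\cdot)\in M_\phi$; then for general $f\in\langle\phi\rangle$ I would choose such combinations $f_n\to f$ in $L^2(G)$, note that by the isometry $\|m_n-m_k\|_{M_\phi}=\|f_n-f_k\|_{L^2(G)}$ so that $(m_n)$ is Cauchy in the complete space $M_\phi$ and converges to some $m\in M_\phi$, and conclude $\hat f=\lim_n m_n\hat\phi=m\hat\phi$ by continuity of $m\mapsto m\hat\phi$.

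The step I expect to be the main obstacle is the density, in the weighted space $M_\phi=L^2(U^*,P_\phi\,d\mu^*)$, of the trigonometric polynomials --- finite linear combinations of the characters $\chi(h,\cdot)$, $h\in H$, equivalently of the generalized Walsh functions $W_\alpha^*$; this is exactly what is needed to push the converse through the generators, and, read the other way, it is what makes $J_\phi$ surjective. Since the weight $P_\phi$ may vanish on a set of positive measure and need not be bounded, a naive truncation is not enough, and I would argue by duality: if $g\in M_\phi$ is orthogonal in $M_\phi$ to every $\chi(h,\cdot)$, then $gP_\phi\in L^1(U^*)$ by Cauchy--Schwarz and all of its Walsh--Fourier coefficients vanish, whence $gP_\phi=0$ $\mu^*$-a.e.\ by the completeness of $\{W_\alpha^*\}$ in $L^2(U^*)$ together with the standard $L^1$-uniqueness theorem for this orthonormal system; thus $g=0$ in $M_\phi$. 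Everything else is routine bookkeeping with $J_\phi$, Plancherel and the periodization identity.
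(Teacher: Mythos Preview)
The paper does not prove this statement: it is quoted from \cite{kg1} and no argument is given, so there is nothing in the paper to compare your proposal against. That said, your approach is correct. Once the preceding cited theorem is read, as intended, as asserting that $J_\phi$ is an isometric isomorphism of $M_\phi$ onto $\langle\phi\rangle$, the present statement is simply the unpacking of that bijection via the defining identity $\widehat{J_\phi m}=m\hat\phi$, and both implications are immediate --- you correctly flag this. Your self-contained alternative --- approximating $f\in\langle\phi\rangle$ by finite linear combinations of translates, using the isometry to produce a Cauchy sequence of trigonometric symbols in $M_\phi$, and invoking density of the characters $\{\chi(h,\cdot)\}_{h\in H}$ in $M_\phi$ --- is precisely how one establishes the surjectivity half of Theorem~3.1 in the first place, so it is not really a different proof but rather a reproving of the cited input. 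Your duality argument for that density (if $g\in M_\phi$ is orthogonal to every character then $gP_\phi\in L^1(U^*)$ by Cauchy--Schwarz and has all Walsh--Fourier coefficients zero, hence $gP_\phi=0$ a.e.\ and $g=0$ in $M_\phi$) is the standard and correct one. One small clarification: the density step is actually needed for the $(\Leftarrow)$ direction (to show $J_\phi m\in\langle\phi\rangle$ for arbitrary $m\in M_\phi$), not for $(\Rightarrow)$, which only uses that finite combinations of translates are dense in $\langle\phi\rangle$ by definition.
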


\begin{theorem}
There exists a cononical dual $\tilde{\psi}$ of $\psi$ that belongs to $\langle \psi \rangle$ iff $\frac{1}{P_{\psi}}$ belongs to $L^{1}(U^{*})$. In this case $\widehat{\tilde{\psi}}=\frac{1}{P_{\psi}}\hat{\psi}$.
\end{theorem}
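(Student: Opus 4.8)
The plan is to transport the whole problem to the Fourier/$C$-bracket side, where the translation structure is diagonal. Via the isometry $J_\psi:M_\psi\to\langle\psi\rangle$ and Theorem~\ref{charPSI1}, every $f\in\langle\psi\rangle$ is of the form $\hat f=m\hat\psi$ with $m\in M_\psi$, and $\|f\|^2=\|m\|_{M_\psi}^2=\int_{U^*}|m|^2P_\psi\,d\mu^*$. On this side the frame operator of $\{T_h\psi\}_{h\in H}$ becomes multiplication by $P_\psi$, so its inverse is multiplication by $1/P_\psi$; this both forces the formula $\widehat{\tilde\psi}=P_\psi^{-1}\hat\psi$ and makes the condition $1/P_\psi\in L^1(U^*)$ appear as exactly the requirement that $\tilde\psi$ be square integrable.

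First I would record the basic identity linking translates and $C$-brackets. For $f,g\in L^2(G)$ and $h=h_{[\alpha]}\in H$, Plancherel on $G^*$, the relation $\widehat{T_hg}(\omega)=\overline{\chi(h,\omega)}\,\hat g(\omega)$, the fact that $\omega\mapsto\chi(h,\omega)$ is $H^{\perp}$-periodic (since $h\in H$), and periodization onto $U^*$ give
\begin{equation*}
\langle f,T_hg\rangle=\int_{U^*}[\hat f,\hat g](\omega)\,W_\alpha^*(\omega)\,d\mu^*(\omega),\qquad \alpha=\lambda(h).
\end{equation*}
Thus, up to reindexing and conjugation, $(\langle f,T_hg\rangle)_{h\in H}$ is the sequence of generalized Fourier coefficients of $[\hat f,\hat g]\in L^1(U^*)$ relative to the orthonormal basis $\{W_\alpha^*\}$ of $L^2(U^*)$, so by Parseval $\sum_{h\in H}|\langle f,T_hg\rangle|^2=\|[\hat f,\hat g]\|_{L^2(U^*)}^2$. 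Specializing $g=\psi$, $\hat f=m\hat\psi$, and using $[m\hat\psi,\hat\psi]=mP_\psi$ ($m$ being $H^{\perp}$-periodic), this reads $\sum_{h\in H}|\langle f,T_h\psi\rangle|^2=\int_{U^*}|m|^2P_\psi^2\,d\mu^*$.

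Next I would identify the frame operator. From the coefficient identity, $Sf=\sum_{h\in H}\langle f,T_h\psi\rangle T_h\psi$ has $\widehat{Sf}=(mP_\psi)\hat\psi$ whenever $\hat f=m\hat\psi$; that is, under $J_\psi$ the operator $S$ becomes multiplication by $P_\psi$ on $M_\psi$. Since each $T_h$, $h\in H$, commutes with $S$, the canonical dual of $\{T_h\psi\}$ is $\{T_h\tilde\psi\}$ with $\tilde\psi=S^{-1}\psi$, and since $\psi$ has symbol $1$ we get $\widehat{\tilde\psi}=P_\psi^{-1}\hat\psi$ on the spectrum $\Omega=\{\omega\in U^*:P_\psi(\omega)>0\}$ (and $0$ off $\Omega$, where $\hat\psi$ periodizes to $0$). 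The key computation is then the single line
\begin{equation*}
\|\widehat{\tilde\psi}\|_{L^2(G^*)}^2=\int_{U^*}\frac{1}{P_\psi(\omega)^2}\sum_{k\in H^{\perp}}|\hat\psi(\omega\oplus k)|^2\,d\mu^*(\omega)=\int_{U^*}\frac{1}{P_\psi(\omega)}\,d\mu^*(\omega),
\end{equation*}
which shows $\tilde\psi\in L^2(G)$ iff $1/P_\psi\in L^1(U^*)$. If this holds, then $1/P_\psi\in M_\psi$ (its $M_\psi$-norm squared is again $\int_{U^*}1/P_\psi$), hence $\tilde\psi\in\langle\psi\rangle$ by Theorem~\ref{charPSI1}; moreover $[\widehat{\tilde\psi},\hat\psi]=P_\psi^{-1}P_\psi=\mathbf{1}_{\Omega}$, and feeding this back into the coefficient identity gives $f=\sum_{h\in H}\langle f,T_h\psi\rangle T_h\tilde\psi=\sum_{h\in H}\langle f,T_h\tilde\psi\rangle T_h\psi$ for all $f\in\langle\psi\rangle$, so $\tilde\psi$ is the asserted canonical dual. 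Conversely, a canonical dual lying in $\langle\psi\rangle$ must equal $S^{-1}\psi$, forcing $\widehat{\tilde\psi}=P_\psi^{-1}\hat\psi$ and hence $1/P_\psi\in L^1(U^*)$.

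The main obstacle is the bookkeeping in the middle step, i.e. identifying $S$ with multiplication by $P_\psi$ and handling the set where $P_\psi$ is not bounded below. Two points need care: justifying the interchange of the $H^{\perp}$-summation with the generalized Walsh expansion (which follows from $[\hat f,\hat\psi]\in L^1(U^*)$, itself a consequence of Cauchy--Schwarz and $f,\psi\in L^2$), and the fact that when $\{T_h\psi\}$ is not a frame sequence the operator $S$, though bounded, is not boundedly invertible, so $S^{-1}$ is only densely defined and $\psi$ lies in its domain precisely when $1/P_\psi\in M_\psi$. For this reason the step returning $\tilde\psi$ to $\langle\psi\rangle$ and verifying the reproducing formulas is cleanest when carried out through the isometry $J_\psi$ (i.e. at the level of symbols in $M_\psi$) rather than via bounded operators on $\langle\psi\rangle$ directly.
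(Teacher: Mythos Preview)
Your argument is correct, but it is considerably more elaborate than the paper's and proceeds along a genuinely different route. You diagonalize the frame operator: you identify $S$ with multiplication by $P_\psi$ on $M_\psi$, deduce $\widehat{\tilde\psi}=P_\psi^{-1}\hat\psi$ from $\tilde\psi=S^{-1}\psi$, and then check square-integrability via $\|\widehat{\tilde\psi}\|_{L^2(G^*)}^2=\int_{U^*}P_\psi^{-1}\,d\mu^*$. The paper instead never writes down $S$ explicitly; it starts from the biorthogonality relation $\langle T_h\psi,\tilde\psi\rangle=\delta_{h,0}$, uses Theorem~\ref{charPSI1} to write $\widehat{\tilde\psi}=m\hat\psi$ with $m\in M_\psi$, and observes that the bracket $[\hat\psi,\widehat{\tilde\psi}]=\overline{m}P_\psi$ must have Fourier coefficients $\delta_{h,0}$, forcing $\overline{m}P_\psi=1$ a.e.\ and hence $m=1/P_\psi$. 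Both proofs close with the same observation that $1/P_\psi\in M_\psi$ iff $\int_{U^*}|1/P_\psi|^2P_\psi=\int_{U^*}1/P_\psi<\infty$. Your approach buys a clearer picture of why the formula holds (the frame operator is multiplication, so its inverse is too) and yields the reproducing formulas as a by-product; the paper's approach is shorter and avoids any discussion of unbounded inverses or domains, since it only solves an algebraic equation for the symbol $m$.
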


\begin{proof}
For $\tilde{\psi} \in \langle \psi \rangle$, it follows from Theorem \ref{charPSI1} that there exists $m \in M_{\psi}$ such that $\hat{\tilde{\psi}}=m \hat{\psi}$. Since $\tilde{\psi}$ is canonical dual of $\psi$, 
$$\langle T_{h}\psi, \tilde{\psi} \rangle =\delta_{h,0}, \forall h \in H.$$

Now, $[\hat{\psi}, \hat{\tilde{\psi}}]=\overline{m}P_{\psi}$ in $L^{1}(U^{*})$, and we have $\overline{m}(\omega)P_{\psi}(\omega)=1$ a.e. Thus $m=\overline{m}=\frac{1}{P_{\psi}}$. The result follows as $\frac{1}{P_{\psi}}\in M_{\psi}$ if and only if $\frac{1}{P_{\psi}} \in L^{1}(U^{*})$.
\end{proof}

Let $\Lambda:\langle \phi \rangle \rightarrow l^{2}(\mathbb N_0)$ be the analysis operator. The adjoint of $\Lambda$, denoted by $\Lambda^*$ is called the synthesis operator. Let  $(a_{h})_{h \in H}$ be the canonical basis of $l^{2}(\mathbb N_0)$, where $\mathbb{N}_{0}=\mathbb{N} \cup \{0\}$.

\begin{proposition}\label{psi_char}
Let $(T_{h}\phi)_{h \in H}$ be a Bessel sequence. Then for each $H^{\bot}$-periodic function $m \in L^{2}(U^{*})$ we have $m\hat{\phi}\in L^{2}(G^{*})$. Here $m$ is extended by $H^{\bot}$-periodicity to the function $m$ on $L^{2}(G^{*})$. Moreover, if $(T_{h}\phi)_{h \in H}$ is a frame for $\langle \phi \rangle$, then
\begin{equation} 
\langle \phi \rangle=\{f \in L^{2}(G):\hat{f}=m\hat{\phi}, \; m \in L^{2}(U^{*})\}
\end{equation}	
\end{proposition}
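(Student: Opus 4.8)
The plan is to move everything to the dual group $G^{*}$, where $T_{h}$ becomes modulation by $\chi(h,\cdot)$ and the $C$-bracket turns into an $H^{\bot}$-periodic function to which Parseval's theorem applies directly. First I would establish the identity
\[
\sum_{h\in H}\bigl|\langle f,T_{h}\phi\rangle\bigr|^{2}=\bigl\|[\hat f,\hat\phi]\bigr\|_{L^{2}(U^{*})}^{2},\qquad f\in L^{2}(G),
\]
where $[\hat f,\hat\phi](\omega)=\sum_{k\in H^{\bot}}\hat f(\omega\oplus k)\overline{\hat\phi(\omega\oplus k)}$, so that $[\hat\phi,\hat\phi]=P_{\phi}$. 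This follows by writing $G^{*}=\bigsqcup_{k\in H^{\bot}}(U^{*}\oplus k)$, using $\chi(h,k)=1$ for $h\in H$, $k\in H^{\bot}$ to identify $\langle f,T_{h}\phi\rangle$ with the Fourier coefficient of $[\hat f,\hat\phi]$ against the character $\omega\mapsto\chi(h,\omega)$ on $U^{*}$, and invoking that $\{\chi(h,\cdot):h\in H\}=\{W_{\alpha}^{*}:\alpha\in\mathbb{Z}_{+}\}$ is an orthonormal basis of $L^{2}(U^{*})$; both sides are allowed to be $+\infty$. Specializing to $\hat f=m\hat\phi$ with $m$ extended $H^{\bot}$-periodically, a one-line computation gives $[\hat f,\hat\phi]=m\,P_{\phi}$ and $\|m\hat\phi\|_{L^{2}(G^{*})}^{2}=\int_{U^{*}}|m|^{2}P_{\phi}\,d\mu^{*}=\|m\|_{M_{\phi}}^{2}$; in particular $\sum_{h}|\langle f,T_{h}\phi\rangle|^{2}=\int_{U^{*}}|m|^{2}P_{\phi}^{2}\,d\mu^{*}$ while $\|f\|^{2}=\int_{U^{*}}|m|^{2}P_{\phi}\,d\mu^{*}$.

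For the first assertion I would show that the Bessel hypothesis forces $P_{\phi}\in L^{\infty}(U^{*})$. If not, then $E_{N}:=\{\omega\in U^{*}:P_{\phi}(\omega)>N\}$ has positive measure for every $N$; since $P_{\phi}\in L^{1}(U^{*})$ (its integral equals $\|\phi\|^{2}$), the function $f$ with $\hat f=\chi_{E_{N}}\hat\phi$ lies in $L^{2}(G)$, and by the identities above $\sum_{h}|\langle f,T_{h}\phi\rangle|^{2}=\int_{E_{N}}P_{\phi}^{2}\ge N\int_{E_{N}}P_{\phi}=N\|f\|^{2}$ with $\|f\|^{2}>0$, contradicting the Bessel bound as $N\to\infty$. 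Once $P_{\phi}\in L^{\infty}(U^{*})$ is known, any $H^{\bot}$-periodic $m\in L^{2}(U^{*})$ satisfies $\|m\hat\phi\|_{L^{2}(G^{*})}^{2}=\int_{U^{*}}|m|^{2}P_{\phi}\le\|P_{\phi}\|_{\infty}\|m\|_{L^{2}(U^{*})}^{2}<\infty$, so $m\hat\phi\in L^{2}(G^{*})$; moreover such $m$ then lies in $M_{\phi}$.

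Now suppose $(T_{h}\phi)_{h\in H}$ is a frame for $\langle\phi\rangle$ with bounds $a\le b$. The inclusion $\supseteq$ is immediate: a frame is Bessel, so by the previous step every $m\in L^{2}(U^{*})$ belongs to $M_{\phi}$, whence $m\hat\phi\in L^{2}(G^{*})$ and, by Theorem~\ref{charPSI1}, $(m\hat\phi)^{\check{}}\in\langle\phi\rangle$. For the reverse inclusion, take $f\in\langle\phi\rangle$; Theorem~\ref{charPSI1} gives $\hat f=m\hat\phi$ for some $m\in M_{\phi}$, and it remains to arrange $m\in L^{2}(U^{*})$. The lower frame bound reads $a\int_{U^{*}}|m'|^{2}P_{\phi}\le\int_{U^{*}}|m'|^{2}P_{\phi}^{2}$ for every $m'\in M_{\phi}$; testing with $m'=\chi_{E}$ for subsets $E$ of $\{\omega\in U^{*}:0<P_{\phi}(\omega)<a\}$ (legitimate since $P_{\phi}\in L^{1}(U^{*})$, and the resulting $f$ lies in $\langle\phi\rangle$) forces $\mu^{*}(E)=0$, i.e.\ $P_{\phi}\ge a$ a.e.\ on $S:=\{\omega\in U^{*}:P_{\phi}(\omega)>0\}$. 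Since $P_{\phi}(\omega)=0$ implies $\hat\phi(\omega\oplus k)=0$ for all $k\in H^{\bot}$, the function $\hat\phi$ vanishes a.e.\ on the $H^{\bot}$-periodization of $U^{*}\setminus S$, so replacing $m$ by $m\chi_{S}$ leaves $m\hat\phi$ (hence $\hat f$) unchanged while $\int_{U^{*}}|m\chi_{S}|^{2}\le\tfrac{1}{a}\int_{S}|m|^{2}P_{\phi}\le\tfrac{1}{a}\int_{U^{*}}|m|^{2}P_{\phi}<\infty$. Thus $\hat f=m\hat\phi$ with $m\in L^{2}(U^{*})$, giving $\langle\phi\rangle\subseteq\{f\in L^{2}(G):\hat f=m\hat\phi,\ m\in L^{2}(U^{*})\}$ and completing the proof.

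The technical heart is the passage from the Bessel and lower-frame inequalities to the pointwise bounds $P_{\phi}\le\|P_{\phi}\|_{\infty}$ and $P_{\phi}\ge a$ on $\{P_{\phi}>0\}$; what makes this work is that $P_{\phi}\in L^{1}(U^{*})$ always holds, so the test functions $\hat f=\chi_{E}\hat\phi$ genuinely lie in $L^{2}(G)$ (indeed in $\langle\phi\rangle$, so the hypotheses apply to them). The only remaining subtlety is the bookkeeping on the zero set of $P_{\phi}$ in the reverse inclusion, handled by the observation that $\hat\phi$ meets that set only in a null set, so $m$ can be freely truncated there.
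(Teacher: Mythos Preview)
Your argument is correct, but it takes a genuinely different route from the paper's. The paper's proof is a direct synthesis-operator argument: identify $L^{2}(U^{*})$ with $l^{2}(\mathbb{N}_{0})$ via the orthonormal basis $\{\overline{\chi(h,\cdot)}\}_{h\in H}$, so that $m=\sum_{h}\mu_{h}\overline{\chi(h,\cdot)}$ with $(\mu_{h})\in l^{2}$. The Bessel hypothesis means the synthesis map $(\mu_{h})\mapsto\sum_{h}\mu_{h}T_{h}\phi$ is bounded into $L^{2}(G)$, and taking Fourier transforms gives $m\hat\phi\in L^{2}(G^{*})$ at once. For the frame case, the inclusion $\supseteq$ is the same computation in reverse, and $\subseteq$ uses only that a frame admits $l^{2}$ reconstruction: every $f\in\langle\phi\rangle$ is $\sum_{h}\mu_{h}T_{h}\phi$ for some $(\mu_{h})\in l^{2}$, and then $\hat f=m\hat\phi$ with $m=\sum_{h}\mu_{h}\overline{\chi(h,\cdot)}\in L^{2}(U^{*})$.

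By contrast, you first extract the pointwise bounds $P_{\phi}\in L^{\infty}(U^{*})$ (from Bessel) and $P_{\phi}\ge a$ on $\{P_{\phi}>0\}$ (from the lower frame bound), and then combine these with Theorem~\ref{charPSI1} and a truncation of $m$ to the support of $P_{\phi}$. This works, and in fact it anticipates the content of the later Theorems~\ref{bessel char1} and~\ref{frame seq char1}: you are essentially proving those characterizations on the fly and using them here. The paper's route is shorter and uses only the standard black-box that a frame's synthesis operator is surjective; your route is more self-contained at the level of the weighted space $M_{\phi}$ and yields the pointwise $P_{\phi}$ estimates as a by-product, at the cost of repeating work that the paper defers to the subsequent results.
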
	

\begin{proof}
 Since the system $\lbrace \chi(h,\cdot)\rbrace_{h \in H}$ is an orthonormal basis, we have 
\begin{equation*}
m(\omega)=\sum_{h \in H}\mu_{h}\overline{\chi(h,\omega)}
\end{equation*}
where $\mu=(\mu_{h})_{h \in H}\in l^{2}(\mathbb N_0)$ and
\begin{equation*}
\Vert m \Vert^{2}=\sum_{h \in H} \vert \mu_{h}\vert ^{2}
\end{equation*}

Let $g=\sum_{h \in H}\mu_hT_h \phi$. 
Then $\hat{g}(\omega)=m(\omega)\hat{\phi}(\omega) \in L^2(G^*).$

Further, $f \in L^2(G)$ such that $\hat{f}=m\hat{\phi}$, we have $\hat{f}=\sum_{h \in H}\mu_{h}\overline{\chi(h,\cdot)}\hat{\phi}$. 
Then by taking inverse Fourier transform on both sides,

  we obtain $f=\sum_{h \in H}\mu_{h}T_{h}\phi$. Thus, 
$$
\{f \in L^{2}(G):\hat{f}=m \hat{\phi}, m \in L^{2}(U^{*})\} \subseteq \langle \phi \rangle.
$$

Thus, for each $f \in \langle \phi \rangle$, there exists $\mu=(\mu_{h})_{h \in H} \in l^{2}(\mathbb N_0)$ such that 
$$
f = \sum_{h \in H}\mu_{h}T_{h}\phi.
$$
Applying Fourier transform we get,
\begin{equation*}
\hat{f}(\omega)=\sum_{h \in H}\mu_{h}\overline{\chi(h,\omega)}\hat{\phi}(\omega)=m(\omega)\hat{\phi}(\omega),
\end{equation*}
where $m(\omega)=\sum_{h \in H}\mu_{h}\overline{\chi(h,\omega)}$. Thus, 
$$
\langle \phi \rangle \subseteq \{f \in L^{2}(G):\hat{f}=m \hat{\phi}, m \in L^{2}(U^{*})\}.
$$
\end{proof}

\begin{remark}	
From here onwards we will identify Hilbert spaces $l^{2}(\mathbb N_0)$ and $L^{2}(U^{*})$ using the unitary operator $\mu=(\mu_{h})_{h \in H} \mapsto \sum_{h \in H}\mu_{h}\overline{\chi(h,\cdot)}$.
\end{remark}

\begin{remark}
For $\phi \in L^{2}(G)$, let the sequence $(T_{h}\phi)_{h \in H}$ be a Parseval frame for $\langle \phi \rangle$. Each function $m \in L^{2}(U^{*})$ such that $\hat{f}=m\hat{\phi}$, is called a filter for $f$. For $\hat{f}=m_{f}\hat{\phi}$, $\Vert m_{f}\Vert \leq \Vert m \Vert$. The function $m_{f}$ is called the \emph{minimal filter for $f$}. 
$$
[\hat{f},\hat{\phi}](\omega)=\sum_{h \in H}\langle f,T_{h}\phi \rangle \overline{\chi(h,\omega)}=m_f(\omega).
$$
\end{remark}

\begin{proposition}
Suppose that both $\Phi=\{\phi_{i}:i \in I\}$ and $\Psi=\{\psi_{j}:j \in J\}$ are Parseval frames which generate the same space, i.e $\langle\Phi \rangle=\langle\Psi \rangle$, then $\sum_{i \in I}[\hat{\phi}_{i},\hat{\phi}_{i}](\omega)=\sum_{j \in J}[\hat{\psi}_{j},\hat{\psi}_{j}](\omega)$ a.e.
\end{proposition}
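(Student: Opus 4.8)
The plan is to show first the pointwise identity
\[
[\hat f,\hat f](\omega)=\sum_{i\in I}\bigl|[\hat f,\hat\phi_i](\omega)\bigr|^2\qquad\text{for a.e. }\omega\in U^*\text{ and every }f\in V,
\]
where $V:=\langle\Phi\rangle=\langle\Psi\rangle$; call this identity $(\star)$. The key feature of $(\star)$ is that its left-hand side depends only on $\hat f$ and not on the chosen generating Parseval frame, so the proposition follows by a short symmetrisation: applying $(\star)$ with $f=\psi_j\in V$ gives $[\hat\psi_j,\hat\psi_j](\omega)=\sum_{i\in I}|[\hat\psi_j,\hat\phi_i](\omega)|^2$ a.e., and summing over $j$ (Tonelli) gives $\sum_{j\in J}[\hat\psi_j,\hat\psi_j](\omega)=\sum_{i\in I}\sum_{j\in J}|[\hat\psi_j,\hat\phi_i](\omega)|^2$ a.e.; running the identical argument with the roles of $\Phi$ and $\Psi$ exchanged and $f=\phi_i$ gives $\sum_{i\in I}[\hat\phi_i,\hat\phi_i](\omega)=\sum_{i\in I}\sum_{j\in J}|[\hat\phi_i,\hat\psi_j](\omega)|^2$ a.e., and since $|[\hat\psi_j,\hat\phi_i]|=|[\hat\phi_i,\hat\psi_j]|$ the two double sums coincide.

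To obtain $(\star)$, I would first extend the computation underlying the Remark. For arbitrary $f,\phi\in L^2(G)$, unfolding the integral over $G^*$ along the decomposition $G^*=U^*\oplus H^\perp$ and applying Plancherel shows that $[\hat f,\hat\phi]$ --- which lies in $L^1(U^*)$ by the fiberwise Cauchy--Schwarz inequality --- has Fourier coefficients $\langle f,T_h\phi\rangle$, $h\in H$, relative to the orthonormal basis $\{\overline{\chi(h,\cdot)}\}_{h\in H}$ of $L^2(U^*)$. Consequently, whenever $(T_h\phi)_{h\in H}$ is a Bessel sequence --- in particular for each $\phi_i$, since $(T_h\phi_i)_{h\in H,\,i\in I}$ is a Parseval frame --- Parseval's identity in $L^2(U^*)$ yields $\sum_{h\in H}|\langle f,T_h\phi\rangle|^2=\int_{U^*}|[\hat f,\hat\phi](\omega)|^2\,d\mu^*(\omega)$, while Plancherel together with the same unfolding gives $\|f\|^2=\int_{U^*}[\hat f,\hat f](\omega)\,d\mu^*(\omega)$. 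Summing the former over $i\in I$ and using that $(T_h\phi_i)$ is a Parseval frame for $V$, we get, for every $f\in V$,
\[
\int_{U^*}[\hat f,\hat f](\omega)\,d\mu^*(\omega)=\int_{U^*}\sum_{i\in I}\bigl|[\hat f,\hat\phi_i](\omega)\bigr|^2\,d\mu^*(\omega),
\]
the interchange of sum and integral being justified by Tonelli.

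The remaining step is to promote this equality of integrals to $(\star)$ by testing against periodic multipliers. Fix $f\in V$, let $\nu$ be any trigonometric polynomial in $\{\overline{\chi(h,\cdot)}\}_{h\in H}$ extended $H^\perp$-periodically to $G^*$, and set $g:=(\nu\hat f)^{\vee}$; then $g$ is a finite linear combination of the translates $T_hf$, hence $g\in V$. Since $\nu$ is $H^\perp$-periodic it factors out of the $C$-bracket, $[\hat g,\hat g]=|\nu|^2[\hat f,\hat f]$ and $[\hat g,\hat\phi_i]=\nu\,[\hat f,\hat\phi_i]$, so feeding $g$ into the integral identity above gives
\[
\int_{U^*}|\nu(\omega)|^2\Bigl([\hat f,\hat f](\omega)-\sum_{i\in I}\bigl|[\hat f,\hat\phi_i](\omega)\bigr|^2\Bigr)\,d\mu^*(\omega)=0.
\]
As $\nu$ runs over all trigonometric polynomials the functions $|\nu|^2$ have linear span containing every $\chi(h,\cdot)$, $h\in H$ (for instance $|1+\overline{\chi(h,\cdot)}|^2$ and $|1+i\,\overline{\chi(h,\cdot)}|^2$ recover, after subtracting constants, the real and imaginary parts of $\chi(h,\cdot)$), and both $[\hat f,\hat f]$ and $\sum_{i}|[\hat f,\hat\phi_i]|^2$ belong to $L^1(U^*)$ (each integrates to $\|f\|^2$); hence their difference is an $L^1(U^*)$-function all of whose Fourier coefficients vanish, so it is zero a.e. This is $(\star)$.

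I expect this last promotion step to be the main obstacle: one must verify that the test functions $|\nu|^2$ --- with $\nu$ taken to be a genuine trigonometric polynomial, so that the membership $(\nu\hat f)^{\vee}\in V$ is immediate --- are sufficiently abundant, namely that their linear span contains every character $\chi(h,\cdot)$ of $U^*$, and that both sides genuinely lie in $L^1(U^*)$, so that the vanishing of all Fourier coefficients forces pointwise a.e.\ equality rather than mere equality of integrals. Everything else --- the Fourier-coefficient description of the $C$-bracket, Plancherel, the Parseval-frame identity, and the Tonelli interchanges --- is routine bookkeeping already used implicitly in the Remark and in Proposition~\ref{psi_char}.
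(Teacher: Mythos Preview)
The paper states this proposition without proof, so there is nothing to compare against directly. Your argument is correct: the pointwise identity $(\star)$ is the fiberwise Parseval equality, and your promotion step---testing the integral identity against $|\nu|^2$ for trigonometric polynomials $\nu$ and then recovering every character $\chi(h,\cdot)$ in the span---is sound, since both $[\hat f,\hat f]$ and $\sum_i|[\hat f,\hat\phi_i]|^2$ lie in $L^1(U^*)$ (each integrates to $\|f\|^2$) and an $L^1$ function on $U^*$ with all Fourier coefficients zero vanishes a.e. The final symmetrisation via $|[\hat\psi_j,\hat\phi_i]|=|[\hat\phi_i,\hat\psi_j]|$ and Tonelli is clean; note only that you are tacitly using countability of $I$ and $J$ when passing from ``$(\star)$ holds a.e.\ for each $\psi_j$'' to ``a.e.\ simultaneously for all $j$'', which is standard in this setting.

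For context, the route one usually sees in the shift-invariant literature (Bownik; the Kamyabi~Gol--Tousi papers cited here as \cite{kg1,kg2}) is the fiberization $f\mapsto\hat f_\omega=(\hat f(\omega\oplus h))_{h\in H^\perp}$ already introduced in the paper: one shows that $\{T_h\phi_i\}_{h,i}$ is a Parseval frame for $V$ iff $\{(\hat\phi_i)_\omega\}_{i\in I}$ is a Parseval frame for the fiber $\hat V_\omega$ for a.e.\ $\omega$, whence $\sum_i[\hat\phi_i,\hat\phi_i](\omega)=\sum_i\|(\hat\phi_i)_\omega\|_{\ell^2}^2=\dim\hat V_\omega$, an invariant of $V$ alone. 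Your identity $(\star)$ is exactly that fiberwise Parseval equality unwound without naming the fibers, so the two approaches are equivalent; the fiber language is more conceptual, while yours stays closer to the tools the paper has actually set up.
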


\begin{proposition}\label{range}
Suppose that $\phi\in L^{2}(G)$ is such that the sequence $(T_{h}\phi)_{h \in H}$ is a Parseval frame for $\langle \phi \rangle$. Let $\Omega=\text{supp}(P_{\phi}) =\{\omega \in G^* : P_{\phi}(\omega) \neq 0\}$. Then 
\begin{equation}\label{200}
Range(\Lambda)= \{m \in L^{2}(U^{*}): m(\omega)=0, \text{  for a.e.  } \omega \notin \Omega\}
\end{equation}
In particular, for each $f \in \langle\phi \rangle$, the minimal filter $m_{f}$ is characterized among all filters for
$f$ by the property $m_{f}(\omega)=0$, for a.e. $\omega \notin \Omega$.
\end{proposition}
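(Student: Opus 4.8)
The plan is to identify $Range(\Lambda)$ with the orthogonal complement of $\ker\Lambda^{*}$ inside $L^{2}(U^{*})$ and to compute that kernel explicitly via the periodization identity $P_{\phi}=[\hat\phi,\hat\phi]$. First I would record the concrete form of the two operators after the identification of $l^{2}(\mathbb N_0)$ with $L^{2}(U^{*})$ made in the Remark: for $f\in\langle\phi\rangle$ the vector $\Lambda f$ becomes the minimal filter $m_{f}(\omega)=[\hat f,\hat\phi](\omega)=\sum_{h\in H}\langle f,T_{h}\phi\rangle\,\overline{\chi(h,\omega)}$, while by Proposition \ref{psi_char} the synthesis operator is $\Lambda^{*}m=(m\hat\phi)^{\check{}}$, where $m\in L^{2}(U^{*})$ is extended by $H^{\bot}$-periodicity to $G^{*}$. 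Since $(T_{h}\phi)_{h\in H}$ is a Parseval frame for $\langle\phi\rangle$, the operator $\Lambda$ is an isometry onto its range, so $Range(\Lambda)$ is closed and therefore $Range(\Lambda)=(\ker\Lambda^{*})^{\perp}$.

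Next I would compute $\ker\Lambda^{*}$. We have $m\in\ker\Lambda^{*}$ if and only if $m\hat\phi=0$ in $L^{2}(G^{*})$. Unfolding the integral over $G^{*}$ as a sum of integrals over the $H^{\bot}$-translates of $U^{*}$ and using that $|m|^{2}$ is $H^{\bot}$-periodic gives $\int_{G^{*}}|m\hat\phi|^{2}\,d\mu^{*}=\int_{U^{*}}|m(\omega)|^{2}\bigl(\sum_{h\in H^{\bot}}|\hat\phi(\omega\oplus h)|^{2}\bigr)\,d\mu^{*}(\omega)=\int_{U^{*}}|m(\omega)|^{2}P_{\phi}(\omega)\,d\mu^{*}(\omega)$. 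Hence $m\in\ker\Lambda^{*}$ iff $m(\omega)=0$ for a.e.\ $\omega$ with $P_{\phi}(\omega)\neq0$, i.e.\ iff $m=0$ a.e.\ on $\Omega$. Taking orthogonal complements in the splitting $L^{2}(U^{*})=L^{2}(\Omega)\oplus L^{2}(U^{*}\setminus\Omega)$ yields $Range(\Lambda)=\{m\in L^{2}(U^{*}):m(\omega)=0\ \text{for a.e.}\ \omega\notin\Omega\}$, which is \eqref{200}.

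For the final assertion, fix $f\in\langle\phi\rangle$. The Parseval reconstruction formula $f=\sum_{h\in H}\langle f,T_{h}\phi\rangle T_{h}\phi$ shows, after Fourier transform, that $\hat f=m_{f}\hat\phi$, so $m_{f}$ is itself a filter for $f$; being an element of $Range(\Lambda)$ it vanishes a.e.\ off $\Omega$. Conversely, if $m$ is any filter for $f$ with $m=0$ a.e.\ off $\Omega$, then $(m-m_{f})\hat\phi=\hat f-\hat f=0$, so $m-m_{f}\in\ker\Lambda^{*}$, i.e.\ $m-m_{f}$ vanishes a.e.\ on $\Omega$; combined with $m-m_{f}=0$ a.e.\ off $\Omega$ this forces $m=m_{f}$. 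Thus among all filters for $f$, $m_{f}$ is characterized by being supported in $\Omega$.

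I expect the only delicate point to be the unfolding step $\int_{G^{*}}|m\hat\phi|^{2}=\int_{U^{*}}|m|^{2}P_{\phi}$: one must use that $\{U^{*}\oplus h:h\in H^{\bot}\}$ tiles $G^{*}$ up to null sets, so that the series defining $P_{\phi}$ converges in $L^{1}(U^{*})$ and the interchange of the sum with the integral (Tonelli) is legitimate, and one must know that $m\hat\phi$ really is the Fourier transform of the $L^{2}$-limit $\sum_{h\in H}\mu_{h}T_{h}\phi$ — this is exactly where the Bessel/Parseval hypothesis and Proposition \ref{psi_char} are used. The remaining arguments are formal Hilbert-space manipulations.
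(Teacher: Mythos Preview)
Your proof is correct and rests on the same core idea as the paper's --- namely the identification $Range(\Lambda)=(\ker\Lambda^{*})^{\perp}$ together with the characterization of $\ker\Lambda^{*}$ through $m\hat\phi=0$ and the periodization $P_{\phi}$. The only organizational difference is that the paper handles the inclusion $Range(\Lambda)\subset\{m:m=0\text{ a.e.\ off }\Omega\}$ directly from the bracket formula $m_{f}=[\hat f,\hat\phi]$ (each term in the sum vanishes when $\hat\phi(\omega\oplus h)=0$ for all $h\in H^{\bot}$), and only invokes the orthogonal decomposition $m=m_{1}\oplus m_{2}$ with $m_{2}\in\ker\Lambda^{*}$ for the reverse inclusion; you instead compute $\ker\Lambda^{*}$ once via the unfolding $\int_{G^{*}}|m\hat\phi|^{2}=\int_{U^{*}}|m|^{2}P_{\phi}$ and obtain both inclusions simultaneously by taking the orthogonal complement. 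Your route is slightly more streamlined and makes the use of the Parseval hypothesis (closed range) explicit, whereas the paper's direct bracket argument for the forward inclusion has the small advantage of not needing closedness for that half. For the final statement about $m_{f}$, the paper simply cites the earlier Remark on minimal filters, while you give the short kernel argument $m-m_{f}\in\ker\Lambda^{*}$; both are fine.
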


\begin{proof}

For each $m \in Range(\Lambda)$, 
$$
m(\omega)=\sum_{h \in H}\langle f,T_{h}\phi \rangle\overline{\chi(h,\omega)}. 
$$

Let $\omega \notin \Omega$. Then $\hat{\phi}(\omega \oplus h)=0$, for all $h \in H^{\bot}$. Thus $m(\omega)=0$, for $\omega \notin \Omega$.

Now, suppose that $m(\omega)=0$ for a.e. $\omega \notin \Omega$. Then we have to show that $m \in Range(\Lambda)=Ker(\Lambda^{*})^{\perp}$. Let $m=m_{1} \oplus m_{2}$, where $m_{1}\in Range(\Lambda)$ and $m_{2} \in Ker(\Lambda^{*})$. From above, we have $m_{1}(\omega)=0$, for all $\omega \notin \Omega$. Then $m_{2}(\omega)=0$ for a.e. $\omega \notin \Omega$. 

Since $m_{2}\in Ker(\Lambda^{*})$, which means $m_{2}(\omega)\hat{\phi}(\omega)=0$ a.e. By the $H^{\bot}$-periodicity of $m_{2}$, this implies that $m_{2}(\omega) \hat{\phi}(\omega \oplus h)=0$ a.e. for all $h \in H^{\bot}$. Now, if $\omega \in \Omega$, then $P_{\phi}(\omega) \neq 0$ and hence we must have $\hat{\phi}(\omega \oplus h)\neq 0$ for at least one $h$. Then from the previous equality we get that $m_{2}(\omega)=0$. Hence, $m_{2}(\omega)=0$ for a.e. $\omega$, and $m=m_{1}\in Range(\Lambda)$. The last statement follows from remark 2.
\end{proof}

\begin{theorem}\label{onb}
For $\phi \in L^{2}(G)$, the system $(T_{h}\phi)_{h \in H}$ is a parseval frame for $\langle\phi \rangle$ iff there exists a $H^{\bot}$-periodic set $\Omega \subseteq G^{*}$ such that $P_{\phi}=\chi_{\Omega}$ a.e. In particular, $(T_{h}\phi)_{h \in H}$ is an orthonormal basis for $\langle \phi \rangle$ iff $P_{\phi}(\omega)=1$, for a.e. $\omega \in G^{*}$.
\end{theorem}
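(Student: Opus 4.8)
The plan is to express both the Parseval-frame property and the orthonormality of $(T_h\phi)_{h\in H}$ as pointwise conditions on $P_\phi$, using the $C$-bracket as the bridge. The first step I would carry out is the routine identity, valid for every $f\in L^2(G)$,
\[
\langle f,T_h\phi\rangle=\int_{U^*}[\hat f,\hat\phi](\omega)\,\chi(h,\omega)\,d\mu^*(\omega),\qquad h\in H,
\]
obtained by passing to the Fourier side via $\widehat{T_h\phi}=\overline{\chi(h,\cdot)}\hat\phi$, folding the integral over $G^*$ onto the fundamental domain $U^*$ for $G^*/H^\perp$, and using $\chi(h,k)=1$ for $h\in H$, $k\in H^\perp$; the interchange of sum and integral is justified by two applications of Cauchy--Schwarz since $\hat f,\hat\phi\in L^2(G^*)$. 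This says exactly that $(\langle f,T_h\phi\rangle)_{h\in H}$ is the sequence of Fourier coefficients of the $H^\perp$-periodic function $[\hat f,\hat\phi]\in L^1(U^*)$ relative to the orthonormal basis $\{\chi(h,\cdot)\}_{h\in H}$ of $L^2(U^*)$. Hence, whenever this sequence is square-summable (in particular whenever $(T_h\phi)_{h\in H}$ is Bessel) one has $[\hat f,\hat\phi]\in L^2(U^*)$ and $\sum_{h\in H}|\langle f,T_h\phi\rangle|^2=\|[\hat f,\hat\phi]\|_{L^2(U^*)}^2$.

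For the forward implication I would argue as follows. If $(T_h\phi)_{h\in H}$ is a Parseval frame for $\langle\phi\rangle$, then since every $T_h\phi$ lies in $\langle\phi\rangle$ it is Bessel with bound $1$ on all of $L^2(G)$, so the identities above apply. Now fix an arbitrary $H^\perp$-periodic measurable set $E\subseteq G^*$; the restriction of $\chi_E$ to $U^*$ lies in $L^2(U^*)$, so by the first part of Proposition \ref{psi_char} (and its proof, which produces $f=\sum_h\mu_h T_h\phi\in\langle\phi\rangle$ with $\hat f=\chi_E\hat\phi$) there is an $f\in\langle\phi\rangle$ with $\hat f=\chi_E\hat\phi$. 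A direct computation using $H^\perp$-periodicity of $\chi_E$ gives $[\hat f,\hat\phi]=\chi_E P_\phi$, $\|f\|^2=\int_{E\cap U^*}P_\phi$, and $\|[\hat f,\hat\phi]\|_{L^2(U^*)}^2=\int_{E\cap U^*}P_\phi^2$; the case $E=G^*$ already shows $P_\phi\in L^2(U^*)$, and the Parseval identity $\|f\|^2=\sum_h|\langle f,T_h\phi\rangle|^2$ becomes $\int_{E\cap U^*}(P_\phi-P_\phi^2)=0$. As $E$ ranges over all $H^\perp$-periodic sets, $E\cap U^*$ ranges over all measurable subsets of $U^*$, forcing $P_\phi-P_\phi^2=0$ a.e., i.e. $P_\phi\in\{0,1\}$ a.e.; thus $P_\phi=\chi_\Omega$ with $\Omega=\{\omega: P_\phi(\omega)\neq0\}$, and $\Omega$ is $H^\perp$-periodic because $P_\phi$ is.

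For the converse I would start from $P_\phi=\chi_\Omega$ a.e. with $\Omega$ an $H^\perp$-periodic set. The pointwise Cauchy--Schwarz bound $|[\hat g,\hat\phi](\omega)|^2\le[\hat g,\hat g](\omega)\,P_\phi(\omega)\le[\hat g,\hat g](\omega)$ together with $\int_{U^*}[\hat g,\hat g]=\|g\|^2$ shows $(T_h\phi)_{h\in H}$ is Bessel, so by the first paragraph $\sum_h|\langle g,T_h\phi\rangle|^2=\|[\hat g,\hat\phi]\|_{L^2(U^*)}^2$ for all $g$. Now fix $f\in\langle\phi\rangle$; by Theorem \ref{charPSI1} there is an $H^\perp$-periodic $m$ with $\hat f=m\hat\phi\in L^2(G^*)$, and periodicity of $m$ yields $[\hat f,\hat\phi]=m P_\phi=m\chi_\Omega$, hence $\|[\hat f,\hat\phi]\|_{L^2(U^*)}^2=\int_{\Omega\cap U^*}|m|^2$, while $\|f\|^2=\int_{G^*}|m|^2|\hat\phi|^2=\int_{U^*}|m|^2 P_\phi=\int_{\Omega\cap U^*}|m|^2$. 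Therefore $\sum_h|\langle f,T_h\phi\rangle|^2=\|f\|^2$ for every $f\in\langle\phi\rangle$, i.e. $(T_h\phi)_{h\in H}$ is a Parseval frame for $\langle\phi\rangle$.

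Finally, for the orthonormal-basis clause: if $(T_h\phi)_{h\in H}$ is an orthonormal basis it is in particular a Parseval frame, so $P_\phi=\chi_\Omega$ by the above, and $\|\phi\|^2=1$ forces $\int_{U^*}P_\phi=1=\mu^*(U^*)$, hence $\chi_\Omega=1$ a.e. and $P_\phi\equiv1$; conversely, if $P_\phi\equiv1$ then $(T_h\phi)_{h\in H}$ is a Parseval frame whose elements satisfy $\|T_h\phi\|^2=\|\phi\|^2=\int_{U^*}P_\phi=1$, and a Parseval frame of unit vectors is automatically an orthonormal basis. The step I expect to be the main obstacle is the bookkeeping in the first paragraph — legitimizing the periodization interchange and the passage ``square-summable coefficients $\Rightarrow[\hat f,\hat\phi]\in L^2(U^*)$'' using only $L^1$/$L^2$ information about $\hat\phi$ and no a priori boundedness of $P_\phi$ — together with choosing the test vectors $\hat f=\chi_E\hat\phi$ in the forward direction so that the Parseval identity collapses exactly onto $P_\phi=P_\phi^2$.
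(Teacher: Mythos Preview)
Your argument is correct. Note, however, that the paper states Theorem~\ref{onb} without proof; the result is instead recovered later as the special case $C=D=1$ of Theorem~\ref{frame seq char1} (see the corollary immediately following it), whose proof proceeds on the \emph{synthesis} side via the identity
\[
\|\Lambda^{*}c\|^{2}=\int_{U^{*}}\Bigl|\sum_{h\in H}c_{h}\overline{\chi(h,\omega)}\Bigr|^{2}[\hat\phi,\hat\phi](\omega)\,d\omega
\]
and a description of $(\mathrm{Ker}\,\Lambda^{*})^{\perp}$. Your route is genuinely different: you work on the \emph{analysis} side, identifying $(\langle f,T_{h}\phi\rangle)_{h\in H}$ as the Fourier coefficients of $[\hat f,\hat\phi]$ and then testing the Parseval identity against the family $\hat f=\chi_{E}\hat\phi$ to force $P_{\phi}^{2}=P_{\phi}$. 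What this buys you is a self-contained proof that does not rely on the frame-sequence characterization or on identifying $(\mathrm{Ker}\,\Lambda^{*})^{\perp}$; what the paper's approach buys is a single argument that simultaneously handles general frame bounds $C,D$, with the Parseval case dropping out by specialization. Both the forward and converse directions you give are clean, and the orthonormal-basis clause via ``Parseval frame of unit vectors $\Rightarrow$ orthonormal basis'' is standard and fine.
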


For $f \in L^2(G)$, let $E_{f}=\{\omega \in G^{*}:P_{f}(\omega)>0\}$.
Let $\hat{f}_{\omega}=(\hat{f}(\omega \oplus h))_{h \in H^{\bot}}$ belongs to $l^2(\mathbb N_0)$, for a.e. $\omega \in U^{*}$. Let $S \subset L^{2}(G)$, then $\hat{S}_{\omega}=\{\hat{f}_{\omega}:f \in S\}$. Let $S$ be a shift-invariant subspace of $L^{2}(G)$, then $\eta(S)$ of $S$ is defined by $\eta(S)=\{\omega \in U^{*}:\hat{S}_{\omega}\neq \{0\}\}$ . For $f \in L^2(G)$, let $V_{f}=\{\omega \in U^{*}: \hat{f}_{\omega} \neq 0\}$.

\begin{proposition}
For the principal shift invariant space $\langle \phi \rangle$, we have
$$ 
\eta(\langle \phi\rangle)=V_{\phi}=E_{\phi}\cap U^{*}.
$$
\end{proposition}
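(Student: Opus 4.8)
The plan is to prove the two equalities $V_\phi=E_\phi\cap U^*$ and $\eta(\langle\phi\rangle)=V_\phi$ separately; the first is just a matter of unwinding definitions, and the second splits into two inclusions, of which the reverse one carries the content. For $V_\phi=E_\phi\cap U^*$: by construction $V_\phi\subseteq U^*$, and for $\omega\in U^*$ the fiber $\hat\phi_\omega=(\hat\phi(\omega\oplus h))_{h\in H^\perp}\in\ell^2(\mathbb N_0)$ is nonzero precisely when $\|\hat\phi_\omega\|^2_{\ell^2}=\sum_{h\in H^\perp}|\hat\phi(\omega\oplus h)|^2=P_\phi(\omega)>0$, i.e.\ exactly when $\omega\in E_\phi$. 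Hence $V_\phi=\{\omega\in U^*:\hat\phi_\omega\neq0\}=E_\phi\cap U^*$.

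For the inclusion $V_\phi\subseteq\eta(\langle\phi\rangle)$, note that $\phi\in\langle\phi\rangle$, so $\hat\phi_\omega\in\widehat{\langle\phi\rangle}_\omega$ for a.e.\ $\omega\in U^*$; if moreover $\omega\in V_\phi$ then $\hat\phi_\omega\neq0$, whence $\widehat{\langle\phi\rangle}_\omega\neq\{0\}$ and therefore $\omega\in\eta(\langle\phi\rangle)$.

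For the reverse inclusion $\eta(\langle\phi\rangle)\subseteq V_\phi$, fix $\omega\notin V_\phi$, so that $\hat\phi(\omega\oplus h)=0$ for every $h\in H^\perp$, and let $f\in\langle\phi\rangle$ be arbitrary. By Theorem~\ref{charPSI1} there is an $H^\perp$-periodic function $m$ with $\hat f=m\hat\phi$ a.e.\ on $G^*$; since $H^\perp$ is countable, for a.e.\ such $\omega$ we obtain $\hat f(\omega\oplus h)=m(\omega\oplus h)\hat\phi(\omega\oplus h)=m(\omega)\hat\phi(\omega\oplus h)=0$ for all $h\in H^\perp$, that is, $\hat f_\omega=0$. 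Consequently $\widehat{\langle\phi\rangle}_\omega=\{0\}$, so $\omega\notin\eta(\langle\phi\rangle)$; combining the three steps gives $\eta(\langle\phi\rangle)=V_\phi=E_\phi\cap U^*$.

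The one point needing care — and the only real obstacle — is that the identity $\hat f=m\hat\phi$ of Theorem~\ref{charPSI1} holds off a null set depending on $f$, so one cannot simply discard such a set for each member of the uncountable family $\langle\phi\rangle$. This is repaired by separability: $\langle\phi\rangle$ is separable, and from $\int_{U^*}\|\hat f_\omega\|^2_{\ell^2}\,d\mu^*(\omega)=\|f\|^2$ (Plancherel together with the tiling $G^*=\bigsqcup_{h\in H^\perp}(U^*\oplus h)$) the fiber map $f\mapsto\hat f_\omega$ is, for a.e.\ $\omega$, a bounded linear map on $\langle\phi\rangle$; running the argument of the previous paragraph on a countable dense subset and then passing to the closure produces a single null set outside of which $\hat f_\omega=0$ for all $f\in\langle\phi\rangle$. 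If one only wants these sets to agree up to a null set, this last refinement can be skipped.
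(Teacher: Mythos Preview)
Your proof is correct and follows essentially the same approach as the paper: both arguments use the description $\hat f=m\hat\phi$ from Theorem~\ref{charPSI1} to relate the fibers $\hat f_\omega$ to $\hat\phi_\omega$, and both identify $V_\phi$ with $E_\phi\cap U^*$ by the tautology $\|\hat\phi_\omega\|^2=P_\phi(\omega)$. Your version is slightly cleaner in two respects: for $V_\phi\subseteq\eta(\langle\phi\rangle)$ you simply use $\phi\in\langle\phi\rangle$ (the paper instead invokes an arbitrary nonzero periodic $m$, which could in principle vanish at the given $\omega$), and you address the null-set issue for the uncountable family $\langle\phi\rangle$ via separability, a point the paper leaves implicit.
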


\begin{proof}
For $f \in \langle \phi \rangle$, we have $\hat{f}(\omega)=m(\omega)\hat{\phi}(\omega)$. Then $\hat{f}_{\omega}=(\hat{f}(\omega \oplus h))_{h \in H^{\bot}}$.

Let $\omega \in \eta(\langle \phi \rangle)$, then there exists $f \in \langle \phi \rangle$ such that 
$$
\hat{f}_{\omega} \neq 0.
$$
Thus, $\hat{\phi}_{\omega} \neq 0$ implies that $\omega \in V_{\phi}$. Therefore $\eta(\langle \phi \rangle)\subseteq V_{\phi}$.

Further, if $\omega \in  V_{\phi}$, then $\hat{\phi}_{\omega}=(\hat{\phi}(\omega \oplus h))_{h \in H^{\bot}} \neq 0$. For any $H^{\bot}$-periodic non-zero function $m \in L^2(U^*)$, we have $(m(\omega)\hat{\phi}(\omega \oplus h))_{h \in H^{\bot}} \neq 0$. Hence $\omega \in \eta(\langle \phi \rangle)$ and $V_{\phi} \subseteq \eta(\langle \phi \rangle)$. This proves that $V_{\phi}=\eta(\langle\phi\rangle)$.

For $\omega \in V_{\phi}$, we have $V_{\phi} \subseteq E_{\phi} \cap U^*$. Similarly, $E_{\phi} \cap U^{*} \subseteq  V_{\phi}$. Thus, we have $\eta(\langle \phi \rangle)=V_{\phi}=E_{\phi} \cap U^{*}$. 
\end{proof}

\begin{corollary}
For the principal shift invariant space $\langle \phi \rangle$, the following are equivalent:
\begin{enumerate}
\item[(i)] $\{T_{h}\phi: h \in H\}$ is a Parseval frame of $\langle \phi \rangle$.
\item[(ii)] $\sum_{h \in H^{\bot}}\vert\hat{\phi}(\omega \oplus h)\vert^{2}=\chi_{E_{\phi}}(\omega)$ a.e.
\item[(iii)] $\sum_{h \in H^{\bot}}\vert\hat{\phi}(\omega \oplus h)\vert^{2}=\chi_{\eta(\langle \phi \rangle)\oplus H^{\bot}}(\omega)$ a.e.
\end{enumerate}
\end{corollary}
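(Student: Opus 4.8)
The plan is to deduce all three equivalences from Theorem \ref{onb} together with the preceding proposition identifying $\eta(\langle\phi\rangle)$ with $E_\phi\cap U^*$. First I would record two elementary facts. Since $P_\phi(\omega)=\sum_{h\in H^\bot}|\hat\phi(\omega\oplus h)|^2=[\hat\phi,\hat\phi](\omega)$ is obtained by periodization over $H^\bot$, the function $P_\phi$ is $H^\bot$-periodic, and consequently $E_\phi=\{\omega\in G^*:P_\phi(\omega)>0\}$ is an $H^\bot$-periodic set (up to a null set). Second, $U^*$ is a fundamental domain for $G^*$ modulo $H^\bot$, so that $G^*$ is, up to null sets, the disjoint union $\bigcup_{h\in H^\bot}(U^*\oplus h)$; hence any $H^\bot$-periodic set $\Omega\subseteq G^*$ satisfies $\Omega=(\Omega\cap U^*)\oplus H^\bot$ a.e.

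For (i)$\Leftrightarrow$(ii): if $(T_h\phi)_{h\in H}$ is a Parseval frame for $\langle\phi\rangle$, Theorem \ref{onb} supplies an $H^\bot$-periodic set $\Omega$ with $P_\phi=\chi_\Omega$ a.e.; then $\{P_\phi>0\}=\Omega$ a.e., i.e. $E_\phi=\Omega$ a.e., so $P_\phi=\chi_{E_\phi}$ a.e., which is (ii). Conversely, (ii) asserts $P_\phi=\chi_{E_\phi}$ with $E_\phi$ an $H^\bot$-periodic set, so Theorem \ref{onb} applied with $\Omega=E_\phi$ gives (i).

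For (ii)$\Leftrightarrow$(iii): by the proposition above, $\eta(\langle\phi\rangle)=E_\phi\cap U^*$, and since $E_\phi$ is $H^\bot$-periodic the second elementary fact gives $E_\phi=(E_\phi\cap U^*)\oplus H^\bot=\eta(\langle\phi\rangle)\oplus H^\bot$ a.e. Therefore $\chi_{E_\phi}=\chi_{\eta(\langle\phi\rangle)\oplus H^\bot}$ a.e., so (ii) and (iii) are literally the same identity. Chaining the two equivalences finishes the proof.

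The only genuine work lies in the two preliminary observations, namely that periodization yields a genuinely $H^\bot$-periodic function (so that $E_\phi$ is periodic up to null sets) and that $U^*$ tiles $G^*$ under $H^\bot$-translations, so that the $H^\bot$-periodic extension of $E_\phi\cap U^*$ recovers $E_\phi$ exactly. I expect the tiling / fundamental-domain claim to be the step requiring the most care, since it rests on the structure of $G^*$ and $H^\bot$ set up in Section 2; once it is in place, both equivalences are immediate appeals to Theorem \ref{onb} and the proposition.
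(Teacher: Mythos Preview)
Your proposal is correct and follows exactly the route the paper intends: the corollary is stated without proof immediately after Theorem~\ref{onb} and the proposition $\eta(\langle\phi\rangle)=V_\phi=E_\phi\cap U^*$, and your argument simply combines these two results via the $H^\bot$-periodicity of $P_\phi$ and the fundamental-domain property of $U^*$. There is nothing to add.
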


\begin{corollary}
Suppose that the principal shift invariant spaces generated by $\phi$ and $\psi$ are equal i.e $\langle \phi \rangle=\langle \psi \rangle$. Then, we have $E_{\phi}=E_{\psi}$.
\end{corollary}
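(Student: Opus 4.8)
The plan is to deduce this Corollary directly from the Proposition proved immediately above, which already identifies $\eta(\langle\phi\rangle)$ with the ``slice'' $E_\phi\cap U^*$, and then to recover the full set $E_\phi$ from that slice by periodicity.

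First I would record the elementary observation that for any $f\in L^2(G)$ the periodization $P_f(\omega)=\sum_{h\in H^{\bot}}|\hat f(\omega\oplus h)|^{2}$ is $H^{\bot}$-periodic: replacing $\omega$ by $\omega\oplus h_0$ for $h_0\in H^{\bot}$ merely reindexes the summation over $H^{\bot}$. Hence the set $E_f=\{\omega\in G^{*}:P_f(\omega)>0\}$ is, up to a null set, $H^{\bot}$-periodic, and since $U^{*}$ serves as a fundamental domain for $G^{*}/H^{\bot}$, the set $E_f$ is completely determined by $E_f\cap U^{*}$; explicitly $E_f=(E_f\cap U^{*})\oplus H^{\bot}$ a.e. This is the same transversal structure already used in Section~3, e.g. in the identification of $l^{2}(\mathbb N_0)$ with $L^{2}(U^{*})$.

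Next I would apply the preceding Proposition, which gives $\eta(\langle\phi\rangle)=V_{\phi}=E_{\phi}\cap U^{*}$ and, applied to $\psi$, $\eta(\langle\psi\rangle)=E_{\psi}\cap U^{*}$. Since $\eta(S)$ depends only on the subspace $S$, the hypothesis $\langle\phi\rangle=\langle\psi\rangle$ forces $\eta(\langle\phi\rangle)=\eta(\langle\psi\rangle)$, and therefore $E_{\phi}\cap U^{*}=E_{\psi}\cap U^{*}$. Combining this with the periodic extension from the previous step,
$$
E_{\phi}=(E_{\phi}\cap U^{*})\oplus H^{\bot}=(E_{\psi}\cap U^{*})\oplus H^{\bot}=E_{\psi}
$$
a.e., which is the asserted equality.

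There is essentially no substantial obstacle here; the only thing requiring care is the measure-theoretic bookkeeping, namely verifying that $G^{*}$ is, up to a set of measure zero, the disjoint union $\bigsqcup_{h\in H^{\bot}}(U^{*}\oplus h)$, so that the ``periodic extension'' of a measurable subset of $U^{*}$ is well defined, and tracking the ``a.e.'' qualifiers through the identities $P_{\phi}\!\cap\!U^{*}$ vs.\ $P_{\psi}\!\cap\!U^{*}$. Everything else is a direct substitution into the result of the preceding Proposition.
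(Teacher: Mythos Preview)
Your argument is correct and is exactly the deduction the paper intends: the Corollary is stated without proof immediately after the Proposition $\eta(\langle\phi\rangle)=V_{\phi}=E_{\phi}\cap U^{*}$, and your proof spells out the implicit reasoning---equal spaces give equal $\eta$'s, hence equal slices $E_{\phi}\cap U^{*}=E_{\psi}\cap U^{*}$, and $H^{\bot}$-periodicity of $P_{f}$ recovers the full sets. The only slip is the typo ``$P_{\phi}\cap U^{*}$'' near the end, which should read ``$E_{\phi}\cap U^{*}$''.
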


\begin{proposition}
Let $\phi \in L^{2}(G)$ and assume that $\{T_{h}\phi\}_{h \in H}$ is a frame for $\langle \phi \rangle$. Then 
$ST_{h}=T_{h}S$ and $S^{-1}T_{h}=T_{h}S^{-1}$ on $\langle \phi \rangle$, for all $h \in H$, where $S=\Lambda^*\Lambda$ is the frame operator.
\end{proposition}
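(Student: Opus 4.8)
The plan is to use the explicit form of the frame operator, $Sf=\sum_{h\in H}\langle f,T_h\phi\rangle\,T_h\phi$ (convergence in $L^2(G)$), together with two elementary facts: each $T_{h'}$, $h'\in H$, is a unitary operator on $L^2(G)$ which, by the shift-invariance of $\langle\phi\rangle$ recalled at the start of Section~3, restricts to a unitary operator of $\langle\phi\rangle$ onto itself; and $T_{h'}T_h\phi=T_{h'\oplus h}\phi$, where $h\mapsto h'\oplus h$ is a bijection of the index group $H$.

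First I would fix $h'\in H$ and $f\in\langle\phi\rangle$ and compute $ST_{h'}f$. Using $T_{h'}^{*}=T_{\ominus h'}$ one rewrites $\langle T_{h'}f,T_h\phi\rangle=\langle f,T_{\ominus h'}T_h\phi\rangle=\langle f,T_{h\ominus h'}\phi\rangle$, so that
$$
ST_{h'}f=\sum_{h\in H}\langle f,T_{h\ominus h'}\phi\rangle\,T_h\phi .
$$
Reindexing the sum by $k=h\ominus h'$ (legitimate because the frame expansion converges unconditionally in $L^2(G)$) and pulling the bounded operator $T_{h'}$ through the convergent series gives
$$
ST_{h'}f=\sum_{k\in H}\langle f,T_k\phi\rangle\,T_{k\oplus h'}\phi
=T_{h'}\sum_{k\in H}\langle f,T_k\phi\rangle\,T_k\phi=T_{h'}Sf ,
$$
which is the identity $ST_{h'}=T_{h'}S$ on $\langle\phi\rangle$.

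For the second identity I would invoke that, since $\{T_h\phi\}_{h\in H}$ is a frame for $\langle\phi\rangle$, the operator $S=\Lambda^{*}\Lambda$ is bounded, positive, and invertible as an operator of $\langle\phi\rangle$ onto itself, so $S^{-1}$ is defined on all of $\langle\phi\rangle$, which $T_{h'}$ also maps onto itself. Applying $S^{-1}$ on the left and then on the right of $ST_{h'}=T_{h'}S$ yields $T_{h'}S^{-1}=S^{-1}T_{h'}$ on $\langle\phi\rangle$, completing the proof.

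The main obstacle is essentially bookkeeping rather than analysis: one must be careful that $T_{h'}$ genuinely preserves $\langle\phi\rangle$ (this is precisely the shift-invariance property) and that the reindexing of the infinite sum and the interchange with $T_{h'}$ are justified — here this is immediate from the unconditional convergence of the frame expansion, so no delicate estimate is needed.
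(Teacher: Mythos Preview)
Your proof is correct and follows essentially the same route as the paper: both compute $ST_{h'}f$ via the frame expansion, reindex the sum using the group structure of $H$, and pull $T_{h'}$ outside. The paper's version is terser (it collapses the reindexing into one line and dismisses the $S^{-1}$ statement with ``similarly''), while you spell out the unitarity, the unconditional-convergence justification, and the derivation of $S^{-1}T_{h'}=T_{h'}S^{-1}$ from $ST_{h'}=T_{h'}S$; but the underlying argument is the same.
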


\begin{proof}
Given $f \in \langle \phi \rangle$ and $h \in H$, we have 
\begin{align*}
ST_{h}f & = \sum_{h' \in H}\langle T_{h}f,T_{h'}\phi \rangle T_{h'}\phi\\
        & = \sum_{h' \in H}\langle f,T_{h'}\phi \rangle T_{h'\oplus h}\phi\\
        & = T_{h}Sf.
\end{align*}
Similarly, we can proof the second part of this lemma.
\end{proof}

\begin{theorem}\label{bessel char1}
For every $\phi \in L^{2}(G)$, the following conditions are equivalent:
\begin{enumerate}
\item[(i)] The sequence $\{T_{h}\phi:h \in H\}$ forms a Bessel sequence in $L^{2}(G)$ with bound $D$.
\item[(ii)] The synthesis operator $\Lambda^{*}$ is well defined and $\Vert \Lambda^{*}\Vert \leq \sqrt{D}$.
\item[(iii)] $[\hat{\phi},\hat{\phi}](\omega)\leq D$ a.e. on $U^{*}$.
\end{enumerate}
\end{theorem}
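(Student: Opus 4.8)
The plan is to prove the cycle (iii) $\Rightarrow$ (ii) $\Rightarrow$ (i) $\Rightarrow$ (iii), with essentially all of the analytic content packed into a single fiberization identity. Given a finitely supported sequence $c=(c_h)_{h\in H}\in l^2(\mathbb N_0)$, attach to it the $H^{\perp}$-periodic trigonometric polynomial $m(\omega)=\sum_{h\in H}c_h\overline{\chi(h,\omega)}$, so that $\|c\|_{l^2}^2=\|m\|_{L^2(U^*)}^2$ by orthonormality of $\{\chi(h,\cdot)\}_{h\in H}$ on $U^{*}$. Since $\widehat{T_h\phi}=\overline{\chi(h,\cdot)}\,\hat\phi$ (as in the proof of Proposition \ref{psi_char}), Plancherel gives $\bigl\|\sum_{h}c_hT_h\phi\bigr\|_{L^2(G)}^2=\int_{G^*}|m(\omega)|^2|\hat\phi(\omega)|^2\,d\mu^*(\omega)$; unfolding $G^*$ as the disjoint union of the cosets $U^*\oplus h$, $h\in H^{\perp}$, and using the $H^{\perp}$-periodicity of $m$, one obtains
\[
\Bigl\|\sum_{h\in H}c_hT_h\phi\Bigr\|_{L^2(G)}^2=\int_{U^*}|m(\omega)|^2\,[\hat\phi,\hat\phi](\omega)\,d\mu^*(\omega)\qquad(\star)
\]
for every finitely supported $c$. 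The interchange of sum and integral here is harmless, $m$ being bounded and $c$ finitely supported; note also that $[\hat\phi,\hat\phi]=P_\phi\in L^1(U^*)$, since integrating $(\star)$'s weight over $U^*$ returns $\|\phi\|^2$.

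Now (iii) $\Rightarrow$ (ii): if $[\hat\phi,\hat\phi]\le D$ a.e.\ on $U^*$, then $(\star)$ yields $\bigl\|\sum_h c_hT_h\phi\bigr\|^2\le D\|m\|^2=D\|c\|^2$ for every finitely supported $c$. Consequently the partial sums of $\sum_h c_hT_h\phi$ over finite subsets of $H$ are Cauchy for every $c\in l^2(\mathbb N_0)$, so the synthesis map $\Lambda^*:c\mapsto\sum_h c_hT_h\phi$ is well defined on all of $l^2(\mathbb N_0)$ and satisfies $\|\Lambda^*\|\le\sqrt D$. Then (ii) $\Rightarrow$ (i) is the standard duality: the analysis operator is $\Lambda=(\Lambda^*)^*$, hence bounded with $\|\Lambda\|=\|\Lambda^*\|\le\sqrt D$, and since $\Lambda f=(\langle f,T_h\phi\rangle)_{h\in H}$ this is precisely $\sum_{h\in H}|\langle f,T_h\phi\rangle|^2\le D\|f\|^2$ for all $f\in L^2(G)$.

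Finally (i) $\Rightarrow$ (iii): the Bessel hypothesis makes $\Lambda$ bounded with $\|\Lambda\|\le\sqrt D$, hence $\Lambda^*$ is bounded with the same bound, so $(\star)$ gives $\int_{U^*}|m|^2\,[\hat\phi,\hat\phi]\le D\int_{U^*}|m|^2$ first for all trigonometric polynomials $m$ and then, by density, for every $m\in L^2(U^*)$. Testing with $m=\chi_E$, where $E=\{\omega\in U^*:[\hat\phi,\hat\phi](\omega)>D\}$, forces $\mu^*(E)=0$, i.e.\ $[\hat\phi,\hat\phi]\le D$ a.e.\ on $U^*$.

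The one genuinely nonroutine step is the identity $(\star)$: one must correctly pass from the $L^2(G)$-norm to a weighted $L^2(U^*)$-integral by using that the Fourier transform sends translation by $h\in H$ to modulation by $\overline{\chi(h,\cdot)}$ and that periodization over $H^{\perp}$ collapses $G^*$ onto $U^*$ with weight $[\hat\phi,\hat\phi]=P_\phi$. Everything else is the textbook Bessel $\leftrightarrow$ bounded-synthesis equivalence together with a characteristic-function test-function argument, so no further obstacle is anticipated.
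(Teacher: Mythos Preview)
Your proof is correct and follows essentially the same route as the paper: both arguments hinge on the fiberization identity $(\star)$ expressing $\|\Lambda^*c\|^2$ as $\int_{U^*}|m|^2[\hat\phi,\hat\phi]$, use it to pass between (ii) and (iii), treat (i)$\Leftrightarrow$(ii) as standard, and finish the direction toward (iii) by testing against the characteristic function of the set where $[\hat\phi,\hat\phi]>D$. Your write-up is more explicit about deriving $(\star)$ via periodization and about first working with finitely supported $c$ before extending by density, but the underlying strategy is the same.
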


\begin{proof}
The equivalence of first two statements is obvious.
	 
(ii) can be written as $\Vert \Lambda^{*}c \Vert^{2} \leq D\Vert c\Vert^{2}$, for every $c=(c_h)\in l^2(\mathbb N_0)$.  Now using the Parseval identity, we have $\Vert \sum_{h \in H}c_{h}\widehat{T_{h}\phi}\Vert^{2} \leq D\Vert c\Vert^{2}$. 

Now suppose that (iii) holds. Then for every $c=(c_h)\in l^2(\mathbb N_0)$
\begin{equation}\label{114}
\int_{U^{*}}\vert \sum_{h \in H}c_{h}\overline{ \chi(h,\omega)}\vert ^2[\hat{\phi},\hat{\phi}](\omega) \; d\omega \leq D \int_{U^{*}}\vert\sum_{h \in H}c_{h}\overline{\chi(h,\omega)}\vert ^{2} \;d\omega, 	\text{ for } c=(c_h)_{h \in H} \in l^{2}(\mathbb{N}_0)
\end{equation}
which gives
\begin{gather*}
\int_{U^{*}}\vert \sum_{h \in H}c_{h}\overline{\chi(h,\omega)}\vert^{2}\sum_{h' \in H^{\bot}}\vert \hat{\phi}(\omega \oplus h')\vert^{2} \; d\omega \leq 
\Vert\sum_{h \in H}c_{h}\widehat{T_{h}\phi}\Vert^{2} \leq D\Vert c \Vert^{2}
\end{gather*}

Thus, (iii) implies (ii). 

Conversely, suppose that (iii) does not hold. Then there exists a measurable set $E \subset U^{*}$ having positive measure such that 
\begin{equation}
[\hat{\phi},\hat{\phi}](\omega)> D \text{ a.e. on } E.
\end{equation}

Since $\chi_{E} \in L^{2}(U^{*})$, then from \eqref{114}, we get
$$
\int_{U^{*}}\vert \chi_{E}(\omega) \vert^{2}[\hat{\phi},\hat{\phi}](\omega)\;d\omega \leq D\int_{U^{*}}\vert \chi_{E}(\omega)\vert^{2}\;d\omega
$$ 
that implies $[\hat{\phi},\hat{\phi}](\omega) \leq D$ a.e. on $E$, which is a contradiction.	
\end{proof}

\begin{definition}\label{frame sequence}
A sequence $\{f_{j}\}_{j \in \mathbb N_0}$ is a \emph{frame sequence} in Hilbert space $H$ if the synthesis operator $T$ is well defined on $l^{2}(\mathbb{N}_{0})$ and there exist constants $0<C \leq D< \infty$ such that 
$$
C \Vert c \Vert^{2}\leq \Vert Tc \Vert^{2} \leq D\Vert c \Vert^{2} \text{ for } c \in (\text{Ker}(T))^{\perp}.
$$ 
\end{definition}

\begin{theorem}\label{frame seq char1}
The sequence $\{T_{h}\phi:h \in H\}$ in $L^{2}(G)$ is a frame sequence with frame bounds $C$ and $D$ iff 
\begin{equation}\label{115}
C \leq [\hat{\phi},\hat{\phi}](\omega) \leq D \text{ for a.e. $\omega$ in }U^{*}\setminus \mathcal{N}_{\phi},
\end{equation}
where $\mathcal{N}_{\phi}=\{\omega \in U^{*}:[\hat{\phi},\hat{\phi}](\omega)=0\}$.
\end{theorem}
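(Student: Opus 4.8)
The plan is to pass to the Fourier side, write the norm of the synthesis operator $\Lambda^{*}$ as a single scalar integral against $[\hat{\phi},\hat{\phi}]$ (exactly as in the proof of Theorem~\ref{bessel char1}), identify $\mathrm{Ker}\,\Lambda^{*}$ and its orthogonal complement, and then read the two-sided estimate \eqref{115} directly off that integral. First I would set up the identity. Identifying $l^{2}(\mathbb N_0)$ with $L^{2}(U^{*})$ by $c=(c_h)_{h\in H}\mapsto m(\omega)=\sum_{h\in H}c_h\overline{\chi(h,\omega)}$ as in the Remark, the synthesis operator is $\Lambda^{*}m=(m\hat{\phi})^{\check{}}$. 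Applying Plancherel on $G^{*}$ and then folding the integral over $G^{*}=U^{*}\oplus H^{\bot}$ using the $H^{\bot}$-periodicity of $m$ gives
\begin{equation*}
\Vert\Lambda^{*}m\Vert_{L^{2}(G)}^{2}=\int_{G^{*}}\vert m(\omega)\vert^{2}\vert\hat{\phi}(\omega)\vert^{2}\,d\omega=\int_{U^{*}}\vert m(\omega)\vert^{2}\,[\hat{\phi},\hat{\phi}](\omega)\,d\omega,
\end{equation*}
while $\Vert m\Vert_{L^{2}(U^{*})}^{2}=\sum_{h\in H}\vert c_h\vert^{2}$. This identity is the engine of the whole proof.

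Next I would identify the kernel. From the displayed identity, $m\in\mathrm{Ker}\,\Lambda^{*}$ iff $\vert m(\omega)\vert^{2}[\hat{\phi},\hat{\phi}](\omega)=0$ a.e. on $U^{*}$, i.e. iff $m$ vanishes a.e. on $U^{*}\setminus\mathcal{N}_{\phi}$ (recall $[\hat{\phi},\hat{\phi}]\ge 0$). Hence $\mathrm{Ker}\,\Lambda^{*}=\{m\in L^{2}(U^{*}):\mathrm{supp}\,m\subseteq\mathcal{N}_{\phi}\}$ is a closed subspace of $L^{2}(U^{*})$, and therefore
\begin{equation*}
(\mathrm{Ker}\,\Lambda^{*})^{\perp}=\{m\in L^{2}(U^{*}):m=0\ \text{a.e. on}\ \mathcal{N}_{\phi}\}.
\end{equation*}
Consequently, for $m\in(\mathrm{Ker}\,\Lambda^{*})^{\perp}$ the identity reads $\Vert\Lambda^{*}m\Vert^{2}=\int_{U^{*}\setminus\mathcal{N}_{\phi}}\vert m\vert^{2}[\hat{\phi},\hat{\phi}]$ and $\Vert m\Vert^{2}=\int_{U^{*}\setminus\mathcal{N}_{\phi}}\vert m\vert^{2}$.

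Then I would close both implications. If \eqref{115} holds, then $[\hat{\phi},\hat{\phi}]\le D$ a.e. on all of $U^{*}$ (it is $0$ on $\mathcal{N}_{\phi}$), so by Theorem~\ref{bessel char1} the operator $\Lambda^{*}$ is well defined and bounded; multiplying \eqref{115} by $\vert m\vert^{2}$ and integrating over $U^{*}\setminus\mathcal{N}_{\phi}$ yields $C\Vert m\Vert^{2}\le\Vert\Lambda^{*}m\Vert^{2}\le D\Vert m\Vert^{2}$ for every $m\in(\mathrm{Ker}\,\Lambda^{*})^{\perp}$, which is precisely Definition~\ref{frame sequence}. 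Conversely, if $\{T_{h}\phi\}_{h\in H}$ is a frame sequence with bounds $C,D$ but \eqref{115} fails, there is a measurable $E\subseteq U^{*}\setminus\mathcal{N}_{\phi}$ of positive measure on which $[\hat{\phi},\hat{\phi}]>D$ or $[\hat{\phi},\hat{\phi}]<C$; taking $m=\chi_{E}$, which lies in $(\mathrm{Ker}\,\Lambda^{*})^{\perp}$, the frame-sequence inequality together with Step~2 forces $C\vert E\vert\le\int_{E}[\hat{\phi},\hat{\phi}]\le D\vert E\vert$, contradicting the strict inequality on $E$, just as in the last paragraph of the proof of Theorem~\ref{bessel char1}.

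The step I expect to require the most care is the kernel identification: one must justify that $\mathrm{Ker}\,\Lambda^{*}$ is exactly the set of $L^{2}(U^{*})$-functions supported on $\mathcal{N}_{\phi}$ and that it is closed, so that restricting the frame inequality to $(\mathrm{Ker}\,\Lambda^{*})^{\perp}$ is the same as restricting the test functions to be supported off $\mathcal{N}_{\phi}$; this is what makes the exclusion of $\mathcal{N}_{\phi}$ in \eqref{115} the natural one. The remaining ingredients — the folding identity and the characteristic-function test — are already available from the proof of Theorem~\ref{bessel char1}.
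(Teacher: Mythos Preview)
Your proposal is correct and follows essentially the same route as the paper: both compute $\Vert\Lambda^{*}c\Vert^{2}$ as $\int_{U^{*}}\vert m\vert^{2}[\hat\phi,\hat\phi]$, identify $\mathrm{Ker}\,\Lambda^{*}$ and $(\mathrm{Ker}\,\Lambda^{*})^{\perp}$ as the $L^{2}(U^{*})$-functions supported on $\mathcal{N}_{\phi}$ and on $U^{*}\setminus\mathcal{N}_{\phi}$ respectively, invoke Theorem~\ref{bessel char1} for the upper bound, and use the characteristic-function test $m=\chi_{E}$ for the converse of the lower bound. The only cosmetic difference is that the paper treats the upper and lower bounds separately while you package them together.
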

	
\begin{proof}
From definition \ref{frame sequence}, we have $\{T_{h}\phi:h \in H\}$ is a frame sequence in $L^{2}(G)$ with frame bounds $C$ and $D$ iff the synthesis operator $\Lambda^*$ is well defined and
\begin{equation}\label{116}
C\Vert c \Vert^{2} \leq \Vert \Lambda^*c\Vert^{2} \leq D\Vert c\Vert ^{2} \text{ for } c \in (\text{Ker}(\Lambda^*))^{\perp}.
\end{equation}
If the right hand side of the inequality \eqref{116} holds then by the Theorem \ref{bessel char1}, it is equivalent to right hand side of inequality \eqref{115}. Therefore, we only need to prove the equivalence of 
\begin{equation}\label{117}
C\Vert c \Vert^{2} \leq \Vert \Lambda^*c\Vert ^{2} \text{ for }  c \in (\text{Ker}(\Lambda^*))^{\perp}
\end{equation}
and 
\begin{equation}\label{118}
C \leq  [\hat{\phi},\hat{\phi}](\omega) \text{ a.e on } U^{*} \setminus \mathcal{N}_{\phi},
\end{equation}

Now, 
\begin{equation*}
\Vert \Lambda^{*}c\Vert^{2}=\int_{U^{*}}\vert \sum_ {h \in H}c_{h}\overline{\chi(h,\omega)}\vert^{2} [\hat{\phi},\hat{\phi}](\omega) \; d\omega, \text{ for } c=(c_h) \in l^{2}(\mathbb{N}_{0}),
\end{equation*}

therefore
\begin{equation*}
\text{Ker}(\Lambda^{*})=\{(c_h) \in l^{2}(\mathbb{N}_{0}):\sum_{h \in H}c_{h}\overline{\chi(h,\omega)}=0 \text{ for a.e. } \omega \in U^{*} \setminus \mathcal{N}_{\phi}\}.
\end{equation*}
and
\begin{equation*}
(\text{Ker}(\Lambda^{*}))^{\perp}=\{(c_h) \in l^{2}(\mathbb{N}_{0}):\sum_{h \in H}c_{h}\overline{\chi(h,\omega)}=0 \text{ for a.e. } \omega \in \mathcal{N}_{\phi}\}.
\end{equation*}

Thus for $c \in (\text{Ker}(\Lambda^{*}))^{\perp}$, 
\begin{equation*}
\Vert c\Vert^{2}=\int_{U^{*}}\vert \sum_{h \in H}c_{h}\overline{\chi(h,\omega)}\vert^{2}d\omega=\int_{U^{*} \setminus \mathcal{N}_{\phi}}\vert\sum_{h \in H}c_{h}\overline{\chi(h,\omega)}\vert^{2}d\omega,
\end{equation*}
and 
\begin{equation*}
\Vert \Lambda^{*}c\Vert^{2}=\int_{U^{*} \setminus \mathcal{N}_{\phi}} \vert \sum_{h \in H}c_{h}\overline{\chi(h,\omega)} \vert^{2}[\hat{\phi},\hat{\phi}](\omega)d\omega.
\end{equation*}	
Thus by \eqref{117}, we get
\begin{equation}\label{119}
C \int_{U^{*} \setminus \mathcal{N}_{\phi}} \vert \sum_{h \in H}c_{h}\overline{\chi(h,\omega)} \vert ^{2}d\omega \leq \int_{U^{*} \setminus \mathcal{N}_{\phi}}\vert \sum_{h \in H}c_{h}\overline{\chi(h,\omega)}\vert^{2}[\hat{\phi},\hat{\phi}](\omega)d\omega.
\end{equation}	
Now to show that \eqref{119} is equivalent to \eqref{118}. Clearly, \eqref{118} implies \eqref{119}. To prove the converse part we can proceed by the same argument as in the proof of Theorem \ref{bessel char1}.
\end{proof}

\begin{corollary}
The sequence $\{ T_{h}\phi: h \in H\}$ is a Parseval frame in $L^{2}(G)$ iff 
$$
[\hat{\phi},\hat{\phi}](\omega)=\chi_{U^{*} \setminus \mathcal{N}_{\phi}}(\omega), \text{ for a.e. } \omega \in U^*,
$$
where $\mathcal{N}_{\phi}=\{\omega \in U^{*}:[\hat{\phi},\hat{\phi}](\omega)=0\}$.
\end{corollary}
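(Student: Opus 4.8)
The plan is to obtain this corollary as the special case $C=D=1$ of Theorem~\ref{frame seq char1}. The first step is the reduction: since $\{T_h\phi\}_{h\in H}\subseteq\langle\phi\rangle=\overline{\operatorname{span}}\{T_h\phi:h\in H\}$, the assertion ``$\{T_h\phi\}_{h\in H}$ is a Parseval frame in $L^2(G)$'' means exactly that it is a Parseval frame for its closed linear span, which by Definition~\ref{frame sequence} is the same as saying that $\{T_h\phi\}_{h\in H}$ is a frame sequence in $L^2(G)$ with frame bounds $C=D=1$ (the Parseval inequalities are tested against elements of the span, which corresponds precisely to the condition on $(\operatorname{Ker}\Lambda^*)^{\perp}$ in that definition).

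Granting this, I would apply Theorem~\ref{frame seq char1} with $C=D=1$: the family $\{T_h\phi\}_{h\in H}$ is a Parseval frame if and only if
\[
1\le[\hat\phi,\hat\phi](\omega)\le 1\qquad\text{for a.e. }\omega\in U^*\setminus\mathcal{N}_\phi,
\]
that is, $[\hat\phi,\hat\phi](\omega)=1$ for a.e. $\omega\in U^*\setminus\mathcal{N}_\phi$. On the complementary set $\mathcal{N}_\phi$ one has $[\hat\phi,\hat\phi](\omega)=0$ directly from the definition $\mathcal{N}_\phi=\{\omega\in U^*:[\hat\phi,\hat\phi](\omega)=0\}$. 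Combining the two facts yields $[\hat\phi,\hat\phi](\omega)=\chi_{U^*\setminus\mathcal{N}_\phi}(\omega)$ for a.e. $\omega\in U^*$. Conversely, if this identity holds, then on $U^*\setminus\mathcal{N}_\phi$ the bracket equals $1$, so both the lower and upper bounds in \eqref{115} hold with $C=D=1$, and Theorem~\ref{frame seq char1} gives that $\{T_h\phi\}_{h\in H}$ is a frame sequence with bounds $1,1$, i.e. a Parseval frame.

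There is no genuine obstacle here; the corollary is essentially a specialization of Theorem~\ref{frame seq char1}. The only point deserving a sentence of care is the reduction in the first paragraph — that ``Parseval frame in $L^2(G)$'' for a family spanning a (possibly proper) subspace is synonymous with ``frame sequence with bounds $1,1$'' — and this is immediate once one recalls that $\langle\phi\rangle$ is by definition the closed span of the translates. I would therefore keep the proof to these few lines and simply cite Theorem~\ref{frame seq char1}.
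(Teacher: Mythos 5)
Your proof is correct and matches the paper's intent: the paper states this corollary immediately after Theorem~\ref{frame seq char1} without further argument, precisely because it is the specialization $C=D=1$ together with the trivial observation that $[\hat\phi,\hat\phi]$ vanishes on $\mathcal{N}_\phi$ by definition. Your extra care in identifying ``Parseval frame in $L^2(G)$'' with ``frame sequence with bounds $1,1$'' via $(\operatorname{Ker}\Lambda^*)^{\perp}$ is a reasonable and correct reading of the statement.
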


\begin{lem}\label{conv1}
Let $\phi \in L^{2}(G)$ and assume that $\{T_{h}\phi\}_{h \in H}$ is a Bessel sequence. Let $\{c_{h}\}_{h \in H} \in l^{2}(\mathbb N_0)$. Then $\sum_{h \in H}c_{h}T_{h}\phi$ converges in $L^{2}(G)$ and $\sum_{h \in H}c_{h}\overline{\chi(h,\omega)}$ converges in $L^{2}(U^{*})$ and 
\begin{equation}\label{59}
\widehat{\sum_{h \in H}c_{h}T_{h}\phi}(\omega)=(\sum_{h \in H}c_{h}\overline{\chi(h,\omega)})\hat{\phi}(\omega).
\end{equation}
\end{lem}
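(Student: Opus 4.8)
The plan is to dispatch the three assertions in order, obtaining the Fourier identity by checking it on finite partial sums and then passing to the limit, with the Bessel bound supplying the continuity that legitimizes the passage. Fix a Bessel bound $D$ for $\{T_h\phi\}_{h\in H}$. By Theorem \ref{bessel char1} the synthesis operator $\Lambda^{*}$ is well defined on $l^{2}(\mathbb N_0)$ and bounded with $\Vert\Lambda^{*}\Vert\le\sqrt D$, and it sends the canonical basis vector $a_h$ to $T_h\phi$; hence for finite $F\subset H$ the partial sum $\sum_{h\in F}c_hT_h\phi$ is $\Lambda^{*}$ applied to the restriction of $(c_h)$ to $F$. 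Since $(c_h)\in l^{2}(\mathbb N_0)$, these restrictions form a Cauchy net in $l^{2}(\mathbb N_0)$, so by boundedness of $\Lambda^{*}$ their images form a Cauchy net in $L^{2}(G)$; therefore $\sum_{h\in H}c_hT_h\phi$ converges in $L^{2}(G)$ to $g:=\Lambda^{*}c$. The convergence of $\sum_{h\in H}c_h\overline{\chi(h,\cdot)}$ in $L^{2}(U^{*})$ is then immediate, since $\{\chi(h,\cdot)\}_{h\in H}$ is an orthonormal basis of $L^{2}(U^{*})$ and $(c_h)\in l^{2}(\mathbb N_0)$; call its sum $m$ (this is precisely the unitary identification recorded in the Remark following Proposition \ref{psi_char}), note $\Vert m\Vert^{2}=\sum_{h}|c_h|^{2}$, and extend $m$ to $G^{*}$ by $H^{\bot}$-periodicity.

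For the Fourier identity I would start from $\widehat{T_h\phi}(\omega)=\overline{\chi(h,\omega)}\hat\phi(\omega)$, which gives, for every finite $F\subset H$,
$$\widehat{\sum_{h\in F}c_hT_h\phi}(\omega)=\Big(\sum_{h\in F}c_h\overline{\chi(h,\omega)}\Big)\hat\phi(\omega)=m_F(\omega)\hat\phi(\omega),$$
where $m_F:=\sum_{h\in F}c_h\overline{\chi(h,\cdot)}$ is extended $H^{\bot}$-periodically. By Plancherel's theorem together with the first paragraph, the left-hand side converges in $L^{2}(G^{*})$ to $\hat g$. For the right-hand side, observe that for any $H^{\bot}$-periodic $\nu\in L^{2}(U^{*})$, tiling $G^{*}$ by the cosets of $H^{\bot}$ and using the periodicity of $\nu$ yields
$$\Vert\nu\hat\phi\Vert_{L^{2}(G^{*})}^{2}=\int_{U^{*}}|\nu(\omega)|^{2}\,[\hat\phi,\hat\phi](\omega)\,d\omega\le D\,\Vert\nu\Vert_{L^{2}(U^{*})}^{2},$$
the last inequality being Theorem \ref{bessel char1}(iii). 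Applying this with $\nu=m_F-m$ and recalling that $m_F\to m$ in $L^{2}(U^{*})$ shows $m_F\hat\phi\to m\hat\phi$ in $L^{2}(G^{*})$. Hence $\hat g=m\hat\phi$ by uniqueness of limits, which is exactly \eqref{59}.

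The one step that deserves explicit attention — and the only place the Bessel hypothesis enters in an essential way, beyond guaranteeing convergence of the synthesis series — is the displayed norm identity $\Vert\nu\hat\phi\Vert_{L^{2}(G^{*})}^{2}=\int_{U^{*}}|\nu|^{2}[\hat\phi,\hat\phi]$ for $H^{\bot}$-periodic $\nu$, i.e. the continuity of multiplication by $\hat\phi$ from the $H^{\bot}$-periodic functions in $L^{2}(U^{*})$ into $L^{2}(G^{*})$. I do not expect this to be a genuine obstacle: it is the same periodization already used in the proof of Theorem \ref{bessel char1}, and the interchange of sum and integral is legitimate because the integrand is non-negative; one should simply record it rather than leave it implicit.
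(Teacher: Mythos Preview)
Your argument is correct and complete. The paper's own proof is essentially a bare assertion---it says the synthesis series ``clearly'' converges and then restates formula \eqref{59}---so your approach (verify the identity on finite partial sums, pass to the limit using Plancherel on the left and the periodization estimate $\Vert\nu\hat\phi\Vert_{L^{2}(G^{*})}^{2}=\int_{U^{*}}|\nu|^{2}[\hat\phi,\hat\phi]\le D\Vert\nu\Vert_{L^{2}(U^{*})}^{2}$ on the right) is exactly the rigorous expansion the paper leaves implicit.
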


\begin{proof}
Clearly, $\sum_{h \in H}c_{h}T_{h}\phi$ converges, then by \eqref{59} we have 
$\sum_{h \in H}c_{h}\overline{\chi(h,\omega)}$ converges in $L^{2}(U^{*})$. 
$$
\widehat{\sum_{h \in H}c_{h}T_{h}\phi(\omega)}=\sum_{h \in H}c_{h}\overline{\chi(h,\omega)}\hat{\phi}(\omega).
$$
\end{proof}

\begin{proposition}
Let $\phi \in L^{2}(G)$ and assume that $\{T_{h}\phi\}_{h \in H}$ is a frame for $\langle\phi \rangle$, with frame operator $S=\Lambda^*\Lambda$. Define a function $\theta$ via its Fourier transform by 
\begin{equation}\label{60}
\hat{\theta}(\omega)=\left\{\begin{matrix}
\frac{\hat{\phi}(\omega)}{P_{\phi}(\omega)} & \text{ if } \omega \in \Omega, \\ 
0 & \text{ if } \omega \notin \Omega,
\end{matrix}\right.
\end{equation}
where $\Omega=\text{supp}(P_{\phi})$. Then $\theta=S^{-1}\phi$.
\end{proposition}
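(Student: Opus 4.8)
The plan is to show that $\theta\in\langle\phi\rangle$ and that $S\theta=\phi$; since $S=\Lambda^{*}\Lambda$ is a bounded, positive, invertible operator on $\langle\phi\rangle$, these two facts force $\theta=S^{-1}\phi$.

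\emph{Step 1: $\theta\in\langle\phi\rangle$.} By \eqref{60} we have $\hat{\theta}=m\hat{\phi}$, where $m=\chi_{\Omega}/P_{\phi}$ (extended $H^{\bot}$-periodically), and $\Omega=\mathrm{supp}(P_{\phi})$ is $H^{\bot}$-periodic because $P_{\phi}$ is. Since $\{T_{h}\phi\}_{h\in H}$ is a frame for $\langle\phi\rangle$ with lower bound $C>0$, it is a frame sequence, so Theorem~\ref{frame seq char1} gives $P_{\phi}(\omega)=[\hat{\phi},\hat{\phi}](\omega)\ge C$ for a.e. $\omega\in U^{*}\setminus\mathcal{N}_{\phi}=\Omega\cap U^{*}$; hence $|m|\le 1/C$ a.e., and since $\mu^{*}(U^{*})=1$ this yields $m\in L^{2}(U^{*})$. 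Proposition~\ref{psi_char} then gives $m\hat{\phi}\in L^{2}(G^{*})$ and $\theta\in\langle\phi\rangle$. (Equivalently, $\|\hat{\theta}\|_{L^{2}(G^{*})}^{2}=\int_{\Omega\cap U^{*}}P_{\phi}(\omega)^{-1}\,d\omega\le 1/C<\infty$, which is the integrability actually being used; this is precisely the place where the lower frame bound, and not merely the Bessel bound, is needed.)

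\emph{Step 2: Fourier transform of $S\theta$.} Since $\theta\in\langle\phi\rangle$, we have $S\theta=\sum_{h\in H}\langle\theta,T_{h}\phi\rangle T_{h}\phi$ with $(\langle\theta,T_{h}\phi\rangle)_{h\in H}\in l^{2}(\mathbb{N}_{0})$ by the Bessel property. Applying Lemma~\ref{conv1} together with the identity $[\hat{\theta},\hat{\phi}](\omega)=\sum_{h\in H}\langle\theta,T_{h}\phi\rangle\,\overline{\chi(h,\omega)}$ (the remark following Proposition~\ref{psi_char}), we obtain
$$
\widehat{S\theta}(\omega)=\Big(\sum_{h\in H}\langle\theta,T_{h}\phi\rangle\,\overline{\chi(h,\omega)}\Big)\hat{\phi}(\omega)=[\hat{\theta},\hat{\phi}](\omega)\,\hat{\phi}(\omega).
$$

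\emph{Step 3: evaluate the $C$-bracket and conclude.} For $\omega\in\Omega$, periodicity of $\Omega$ and of $P_{\phi}$ give $\hat{\theta}(\omega\oplus h)=\hat{\phi}(\omega\oplus h)/P_{\phi}(\omega)$ for all $h\in H^{\bot}$, so
$$
[\hat{\theta},\hat{\phi}](\omega)=\sum_{h\in H^{\bot}}\hat{\theta}(\omega\oplus h)\,\overline{\hat{\phi}(\omega\oplus h)}=\frac{1}{P_{\phi}(\omega)}\sum_{h\in H^{\bot}}|\hat{\phi}(\omega\oplus h)|^{2}=1,
$$
while for $\omega\notin\Omega$ we have $P_{\phi}(\omega)=0$, hence $\hat{\phi}(\omega\oplus h)=0$ and $\hat{\theta}(\omega\oplus h)=0$ for all $h\in H^{\bot}$, giving $[\hat{\theta},\hat{\phi}](\omega)=0$. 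Thus $[\hat{\theta},\hat{\phi}]=\chi_{\Omega}$. Since $\omega\notin\Omega$ implies $P_{\phi}(\omega)=0$ and therefore $\hat{\phi}(\omega)=0$, Step 2 yields $\widehat{S\theta}=\chi_{\Omega}\hat{\phi}=\hat{\phi}$, i.e. $S\theta=\phi$, and applying $S^{-1}$ on $\langle\phi\rangle$ gives $\theta=S^{-1}\phi$. The main obstacle is Step~1: one must be sure $m=\chi_{\Omega}/P_{\phi}$ genuinely defines an $L^{2}(U^{*})$ function, so that Proposition~\ref{psi_char} applies and $\theta$ lies in $\langle\phi\rangle$ where $S^{-1}$ acts; once that is in hand, Steps~2 and~3 are direct computations with the periodization and the bracket.
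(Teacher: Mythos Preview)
Your proof is correct and follows essentially the same route as the paper's: show $\theta\in\langle\phi\rangle$ via Proposition~\ref{psi_char}, express $\widehat{S\theta}$ through Lemma~\ref{conv1}, identify the resulting multiplier as $\chi_{\Omega}$, and conclude $\widehat{S\theta}=\hat{\phi}$. The only cosmetic differences are that you evaluate the bracket $[\hat{\theta},\hat{\phi}]$ directly from its defining sum (the paper instead computes the Fourier coefficients $\langle\hat{\theta},\overline{\chi(h,\cdot)}\hat{\phi}\rangle$ and reassembles), and you make explicit via Theorem~\ref{frame seq char1} the lower-bound reason $m=\chi_{\Omega}/P_{\phi}\in L^{2}(U^{*})$, which the paper asserts without justification.
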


\begin{proof}
The function 
$$
\zeta(\omega) = \left\{\begin{matrix}
\frac{1}{P_{\phi}(\omega)}&\text{ if } \omega \in \Omega \\ 
 0 & \text{ if } \omega \notin \Omega
\end{matrix}\right.$$
is $H^{\bot}$-periodic and its restriction to $U^{*}$ belongs to $L^{2}(U^{*})$. Thus, by Prop. \ref{psi_char} the function $\theta$ defined by \eqref{60} belongs to $\langle \phi \rangle$. Using the definition of frame operator, properties of the Fourier transform and Lemma \ref{conv1}, we have   
\begin{align*}
\widehat{S\theta}&=\widehat{\sum_{h \in H}\langle \theta, T_{h}\phi\rangle T_{h}\phi}\\
&=\sum_{h \in H}\langle \hat{\theta}, \widehat{T_{h}\phi}\rangle \widehat{T_{h}\phi}
\end{align*}
\begin{align}\label{61}
\widehat{S\theta}(\omega)&=(\sum_{h \in H}\langle \hat{\theta},\overline{\chi(h,\cdot)}\hat{\phi}\rangle \overline{\chi(h,\omega)})\hat{\phi}(\omega)
\end{align}

\begin{align*}
\langle \hat{\theta},\overline{\chi(h,\cdot)}\hat{\phi}\rangle &=\int_{G^{*}}\hat{\theta}(\omega)\overline{\hat{\phi}(\omega)}\chi(h, \omega)d\omega\\
&=\int_{U^{*}}\sum_{h' \in H^{\perp}}\frac{\vert\hat{\phi}(\omega \oplus h')\vert^{2}}{P_{\phi}(\omega \oplus h')}\chi_{\Omega}(\omega \oplus h')\chi(h,\omega)d\omega\\
&=\int_{U^{*}}\chi_{\Omega\cap U^{*}}(\omega)\chi(h,\omega)\; d\omega. 
\end{align*}

Therefore,
$$
\sum_{h \in H}\langle \hat{\theta},\overline{\chi(h,\cdot)}\hat{\phi}\rangle \overline{\chi(h,\omega)}=\chi_{\Omega \cap U^{*}}(\omega), \text{ for } \omega \in U^{*}.
$$
Since $\chi_{\Omega}$ is $H^{\perp}$-periodic, it follows that 
$$
\sum_{h \in H}\langle \hat{\theta},\overline{\chi(h,\cdot)}\hat{\phi}\rangle \overline{\chi(h,\omega)}=\chi_{\Omega}(\omega), \text{ for } \omega \in G^{*}.
$$
Note that $\chi_{\Omega}(\omega) \neq 0$ if $\hat{\phi}(\omega) \neq 0,$ now \eqref{61} implies that 
$$\widehat{S\theta}=\chi_{\Omega}\hat{\phi}=\hat{\phi}.$$
Therefore $S\theta=\phi$ and since $S$ is an invertible operator on $\langle \phi \rangle$ hence $\theta=S^{-1}\phi$.
\end{proof}

\begin{proposition}
Let $\phi \in L^{2}(G)$ and assume that $\{T_{h}\phi\}_{h \in H}$ is a frame sequence. Define the function $\phi^{*}$ via its Fourier transform by
\begin{equation}
\hat{\phi}^{*}(\omega)=\left\{\begin{matrix}
\hat{\phi}(\omega)P_{\phi}^{-1/2}(\omega), & \text{ if }\hat{\phi}(\omega) \neq 0, \\ 
 0, & \text{ if }\hat{\phi}(\omega) =0.
\end{matrix}\right.
\end{equation}
Then $\{T_{h}\phi^{*}\}_{h \in H}$ is a tight frame sequence, and $\langle\phi\rangle=\langle \phi^{*} \rangle$. 
\end{proposition}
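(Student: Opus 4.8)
The plan is to compute the periodization $P_{\phi^{*}}$ explicitly, identify it as the indicator of the support of $P_{\phi}$, and then read off both conclusions from the bracket characterizations proved above. Throughout I would use that $P_{\phi}=[\hat{\phi},\hat{\phi}]$ is $H^{\bot}$-periodic, that $P_{\phi}(\omega)\ge|\hat{\phi}(\omega)|^{2}$ so $\{\hat{\phi}\neq0\}\subseteq\{P_{\phi}\neq0\}$, and that, since $\{T_{h}\phi\}_{h\in H}$ is a frame sequence, Theorem~\ref{frame seq char1} supplies constants $0<C\le D<\infty$ with $C\le P_{\phi}(\omega)\le D$ for a.e.\ $\omega\in U^{*}\setminus\mathcal N_{\phi}$, where $\mathcal N_{\phi}=\{\omega\in U^{*}:P_{\phi}(\omega)=0\}$.

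First I would verify that $\hat{\phi}^{*}$ defines an element of $L^{2}(G)$ and simultaneously compute $P_{\phi^{*}}$. Fix $\omega$ with $P_{\phi}(\omega)\neq0$. For each $h\in H^{\bot}$, if $\hat{\phi}(\omega\oplus h)\neq0$ then, by $H^{\bot}$-periodicity of $P_{\phi}$, $|\hat{\phi}^{*}(\omega\oplus h)|^{2}=|\hat{\phi}(\omega\oplus h)|^{2}P_{\phi}(\omega\oplus h)^{-1}=|\hat{\phi}(\omega\oplus h)|^{2}P_{\phi}(\omega)^{-1}$, while the terms with $\hat{\phi}(\omega\oplus h)=0$ vanish; summing over $h\in H^{\bot}$ gives $P_{\phi^{*}}(\omega)=P_{\phi}(\omega)^{-1}\sum_{h}|\hat{\phi}(\omega\oplus h)|^{2}=1$. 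If $P_{\phi}(\omega)=0$ then every $\hat{\phi}(\omega\oplus h)=0$, so $P_{\phi^{*}}(\omega)=0$. Hence $P_{\phi^{*}}=\chi_{U^{*}\setminus\mathcal N_{\phi}}$ a.e.\ on $U^{*}$ (extended $H^{\bot}$-periodically), and in particular $\|\hat{\phi}^{*}\|_{2}^{2}=\int_{U^{*}}P_{\phi^{*}}\le\mu^{*}(U^{*})=1<\infty$, so $\phi^{*}\in L^{2}(G)$. Since $P_{\phi^{*}}\le1$, Theorem~\ref{bessel char1} shows $\{T_{h}\phi^{*}\}_{h\in H}$ is Bessel, so its synthesis operator is well defined; moreover $\mathcal N_{\phi^{*}}=\mathcal N_{\phi}$ and $[\hat{\phi}^{*},\hat{\phi}^{*}]=\chi_{U^{*}\setminus\mathcal N_{\phi^{*}}}$, which by the Corollary following Theorem~\ref{frame seq char1} is precisely the assertion that $\{T_{h}\phi^{*}\}_{h\in H}$ is a Parseval---hence tight---frame sequence.

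It then remains to show $\langle\phi\rangle=\langle\phi^{*}\rangle$, which I would obtain from the two inclusions $\phi^{*}\in\langle\phi\rangle$ and $\phi\in\langle\phi^{*}\rangle$ via Proposition~\ref{psi_char}. For $\phi^{*}\in\langle\phi\rangle$, set $m(\omega)=P_{\phi}(\omega)^{-1/2}$ on $\{P_{\phi}\neq0\}$ and $m(\omega)=0$ otherwise; then $m$ is $H^{\bot}$-periodic and, by the lower frame bound, $\int_{U^{*}}|m|^{2}\le C^{-1}\mu^{*}(U^{*})<\infty$, so $m\in L^{2}(U^{*})$. Using $\{\hat{\phi}\neq0\}\subseteq\{P_{\phi}\neq0\}$ one checks $m\hat{\phi}=\hat{\phi}^{*}$ a.e., so Proposition~\ref{psi_char} (applicable since $\{T_{h}\phi\}_{h\in H}$ is a frame for $\langle\phi\rangle$) gives $\phi^{*}\in\langle\phi\rangle$, whence $\langle\phi^{*}\rangle\subseteq\langle\phi\rangle$. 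For the reverse, take $n(\omega)=P_{\phi}(\omega)^{1/2}$ on $\{P_{\phi}\neq0\}$ and $0$ otherwise; the upper frame bound gives $\int_{U^{*}}|n|^{2}\le D\,\mu^{*}(U^{*})<\infty$, and $n\hat{\phi}^{*}=\hat{\phi}$ a.e. Since $\{T_{h}\phi^{*}\}_{h\in H}$ is a frame for $\langle\phi^{*}\rangle$ by the previous step, Proposition~\ref{psi_char} gives $\phi\in\langle\phi^{*}\rangle$, so $\langle\phi\rangle\subseteq\langle\phi^{*}\rangle$.

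The only real care needed is the bookkeeping of the three sets $\{\hat{\phi}=0\}$, $\{P_{\phi}=0\}$, and $U^{*}\setminus\mathcal N_{\phi}$, together with the repeated interchange of the sum over $h\in H^{\bot}$; the inclusion $\{\hat{\phi}\neq0\}\subseteq\{P_{\phi}\neq0\}$ is exactly what makes the identities $m\hat{\phi}=\hat{\phi}^{*}$ and $n\hat{\phi}^{*}=\hat{\phi}$ valid everywhere and not merely off a null set. Once that is handled cleanly, the proposition follows directly from Theorems~\ref{bessel char1} and \ref{frame seq char1}, the corollary after the latter, and Proposition~\ref{psi_char}.
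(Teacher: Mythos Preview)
Your argument is correct and, for the first half (computing $P_{\phi^{*}}$ and deducing the tight frame property), it matches the paper's proof essentially line for line; you are simply more explicit about the $H^{\bot}$-periodicity of $P_{\phi}$ and about why $\phi^{*}\in L^{2}(G)$.

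For the equality $\langle\phi\rangle=\langle\phi^{*}\rangle$ you and the paper take slightly different routes. The paper writes $\hat{\phi}^{*}=\beta\hat{\phi}$ for a single function $\beta$ defined to be $P_{\phi}^{-1/2}$ where $\hat{\phi}\neq0$ and $D^{-1/2}$ elsewhere, observes that $\beta$ is bounded above and below (by $C^{-1/2}$ and $D^{-1/2}$), and then argues directly that the Fourier images $\{m\hat{\phi}^{*}:m\in L^{2}(U^{*})\}$ and $\{m\hat{\phi}:m\in L^{2}(U^{*})\}$ coincide. You instead invoke Proposition~\ref{psi_char} twice, exhibiting the $H^{\bot}$-periodic multipliers $m=P_{\phi}^{-1/2}\chi_{\{P_{\phi}\neq0\}}$ and $n=P_{\phi}^{1/2}\chi_{\{P_{\phi}\neq0\}}$ in $L^{2}(U^{*})$ to get the two inclusions $\phi^{*}\in\langle\phi\rangle$ and $\phi\in\langle\phi^{*}\rangle$. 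Your version is a bit cleaner because your multipliers are genuinely $H^{\bot}$-periodic and you appeal to an already-proved proposition; the paper's $\beta$ is not periodic (its value depends on whether $\hat{\phi}(\omega)$, not $P_{\phi}(\omega)$, vanishes), so its one-line conclusion hides essentially the same two-sided boundedness check you make explicit. Either way the content is the same: the frame bounds $C\le P_{\phi}\le D$ on $U^{*}\setminus\mathcal N_{\phi}$ make the passage between $\hat{\phi}$ and $\hat{\phi}^{*}$ an invertible bounded multiplication.
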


\begin{proof}

Let $\omega \in U^{*}$. If $\hat{\phi}(\omega \oplus h)=0$, for all $h \in H^{\bot}$, then $P_{\phi^{*}}(\omega)=0$, so now assume that $\hat{\phi}(\omega \oplus h')\neq 0$ for some $h' \in H^{\bot}$. Then $P_{\phi^{*}}(\omega) \neq 0$.
Therefore, 
\begin{align*}
P_{\phi^{*}}(\omega)=\sum_{h \in H^{\bot}}\vert \hat{\phi^{*}}(\omega \oplus h)\vert^{2}&=\sum_{h \in H^{\bot}}\vert\hat{\phi}(\omega \oplus h)P_{\phi}^{-1/2}(\omega \oplus h)\vert^{2}\\
&=1 \text{ a.e. }
\end{align*}

Thus by Theorem \ref{frame seq char1} $\{T_{h}\phi^{*}\}_{h \in H}$ is a tight frame sequence. 

Note that 
$$
\langle \phi^* \rangle=\lbrace  \sum_{h \in H} c_hT_h\phi^* : (c_h)_{h \in H} \in l^2(\mathbb N_0)\rbrace.
$$
Taking the Fourier transform of the functions in $\langle \phi^* \rangle$, we get
$$
\lbrace \sum_{h\in H} c_h\overline{\chi(h,\cdot)}\hat{\phi}^*: (c_h)_{h \in H} \in l^2(\mathbb N_0)\rbrace=\lbrace \beta \sum_{h\in H} c_h\overline{\chi(h,\cdot)}\hat{\phi}: (c_h)_{h \in H} \in l^2(\mathbb N_0)\rbrace,
$$
where 
\begin{equation}
\beta(\omega)=\left\{\begin{matrix}
P_{\phi}^{-1/2}(\omega), & \text{ if }\hat{\phi}(\omega) \neq 0, \\ 
1/\sqrt{D}, & \text{ if }\hat{\phi}(\omega) =0,
\end{matrix}\right.
\end{equation}
and $C \leq P_{\phi}(\omega) \leq D$, for a.e. $\omega \in U^* \setminus \mathcal N_{\phi}$. Since $\beta$ is a bounded function, therefore
$$
\lbrace \beta \sum_{h\in H} c_h\overline{\chi(h,\cdot)}\hat{\phi}: (c_h)_{h \in H} \in l^2(\mathbb N_0)\rbrace = \lbrace \sum_{h\in H} c_h\overline{\chi(h,\cdot)}\hat{\phi}: (c_h)_{h \in H} \in l^2(\mathbb N_0)\rbrace.
$$

Thus, $\langle\phi\rangle=\langle \phi^{*} \rangle$.
\end{proof}

\section{Generalized dual frames}\label{sec4}

For the general Hilbert spaces we have the following lemma.

\begin{lem}\label{bessel1}
Let $\{f_{k}\}_{k=1}^{\infty}$ and $\{g_{k}\}_{k=1}^{\infty}$ be Bessel sequences in a Hilbert space $H$, and assume that
\begin{equation}
f_{j}=\sum_{k=1}^{\infty}\langle f_{j},g_{k}\rangle f_{k}, \quad \forall j \in \mathbb{N}.
\end{equation}
Then $\{f_{k}\}_{k=1}^{\infty}$ is a frame for $\overline{\text{span}}\{f_{k}\}_{k=1}^{\infty}$, and 
\begin{equation}
f=\sum_{k=1}^{\infty}\langle f,g_{k}\rangle f_{k}, \quad \forall f \in \overline{\text{span}}\{f_{k}\}_{k=1}^{\infty}.
\end{equation}
 \end{lem}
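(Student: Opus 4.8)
The plan is to first establish the reconstruction formula on a dense subspace of $W:=\overline{\text{span}}\{f_k\}_{k=1}^{\infty}$ and then extend it by continuity. Let $B_f$ and $B_g$ denote Bessel bounds for $\{f_k\}_{k=1}^{\infty}$ and $\{g_k\}_{k=1}^{\infty}$ respectively. For $f$ in the (dense) linear span of $\{f_k\}$, write $f=\sum_{j=1}^{N}c_j f_j$; using linearity of the inner product together with the hypothesis $f_j=\sum_{k}\langle f_j,g_k\rangle f_k$, and interchanging the finite sum over $j$ with the series over $k$, one obtains $f=\sum_k\langle f,g_k\rangle f_k$. Thus the desired identity already holds on a dense subset of $W$.

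Next I would introduce the operator $T\colon W\to W$ defined by $Tf=\sum_k\langle f,g_k\rangle f_k$ and verify that it is well defined and bounded: the series converges because $(\langle f,g_k\rangle)_k$ is square-summable by the Bessel property of $\{g_k\}$ and $\{f_k\}$ is Bessel, while for any $h\in W$ the Cauchy--Schwarz inequality gives $|\langle Tf,h\rangle|\le\big(\sum_k|\langle f,g_k\rangle|^2\big)^{1/2}\big(\sum_k|\langle f_k,h\rangle|^2\big)^{1/2}\le\sqrt{B_fB_g}\,\|f\|\,\|h\|$, so $\|T\|\le\sqrt{B_fB_g}$. Since the bounded operators $T$ and the identity on $W$ agree on a dense subspace, they coincide on all of $W$; this yields the reconstruction formula $f=\sum_k\langle f,g_k\rangle f_k$ for every $f\in W$, which is the second assertion of the lemma.

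Finally, for the frame inequalities for $\{f_k\}$ in $W$: the upper bound $\sum_k|\langle f,f_k\rangle|^2\le B_f\|f\|^2$ is exactly the Bessel hypothesis on $\{f_k\}$. For the lower bound, pair the reconstruction formula with $f$ itself to get $\|f\|^2=\sum_k\langle f,g_k\rangle\langle f_k,f\rangle\le\big(\sum_k|\langle f,g_k\rangle|^2\big)^{1/2}\big(\sum_k|\langle f,f_k\rangle|^2\big)^{1/2}\le\sqrt{B_g}\,\|f\|\,\big(\sum_k|\langle f,f_k\rangle|^2\big)^{1/2}$; dividing by $\sqrt{B_g}\,\|f\|$ for $f\neq 0$ and squaring gives $\tfrac{1}{B_g}\|f\|^2\le\sum_k|\langle f,f_k\rangle|^2$. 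Hence $\{f_k\}_{k=1}^{\infty}$ is a frame for $W$ with bounds $1/B_g$ and $B_f$.

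I do not expect a serious obstacle here; the only points requiring care are the interchange of the finite sum with the infinite series in the first step and the verification that $T$ is bounded so that the density argument applies, both of which follow routinely from the two Bessel assumptions and the Cauchy--Schwarz inequality.
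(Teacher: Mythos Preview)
Your argument is correct: the density-plus-boundedness extension of the reconstruction identity is the standard route, and the lower frame bound via Cauchy--Schwarz applied to $\|f\|^2=\sum_k\langle f,g_k\rangle\langle f_k,f\rangle$ is exactly right, yielding frame bounds $1/B_g$ and $B_f$.

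There is nothing to compare against here: the paper states this lemma without proof, treating it as a known fact from general Hilbert space frame theory (it is the standard ``Bessel + dual-type reconstruction implies frame'' lemma found, e.g., in Christensen's or Heil's texts). Your write-up supplies precisely the missing details. The only cosmetic point worth tightening is that $T$ is naturally defined on all of $H$ (not just $W$) as the composition of the bounded analysis operator for $\{g_k\}$ with the bounded synthesis operator for $\{f_k\}$, which makes the boundedness and the range inclusion $T(H)\subseteq W$ immediate; but your version already contains the essential estimate $\|T\|\le\sqrt{B_fB_g}$.
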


Here $\{f_{k}\}_{k=1}^{\infty}$ and $\{g_{k}\}_{k=1}^{\infty}$ are called the dual frames.

\begin{lem}\label{bessel2}
Let $\phi,\tilde{\phi} \in L^{2}(G)$. Assume that $\{T_{h}\phi\}_{h \in H}$ and $\{T_{h}\tilde{\phi}\}_{h \in H}$ are Bessel sequences. Then for all $f \in L^{2}(G)$,
$$
\widehat{\sum_{h \in H}\langle f,T_{h}\tilde{\phi}\rangle T_{h}\phi}(\omega)=\hat{\phi}(\omega) [\hat{f},\hat{\tilde{\phi}}](\omega).
$$
\end{lem}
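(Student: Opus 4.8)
The plan is to reduce the statement to Lemma~\ref{conv1} once we recognise the coefficient sequence $(\langle f,T_h\tilde\phi\rangle)_{h\in H}$ as the sequence of Fourier coefficients of the $C$-bracket $[\hat f,\hat{\tilde\phi}]$ over $U^*$. Fix $f\in L^2(G)$ and put $c_h:=\langle f,T_h\tilde\phi\rangle$ for $h\in H$. Because $\{T_h\tilde\phi\}_{h\in H}$ is a Bessel sequence, $(c_h)_{h\in H}\in l^2(\mathbb N_0)$; and since $\{T_h\phi\}_{h\in H}$ is Bessel as well, Lemma~\ref{conv1} applies to the sequence $(c_h)$ and yields that $\sum_{h\in H}c_hT_h\phi$ converges in $L^2(G)$ with
\[
\widehat{\sum_{h\in H}c_hT_h\phi}(\omega)=\Big(\sum_{h\in H}c_h\overline{\chi(h,\omega)}\Big)\hat\phi(\omega),
\]
the series $\sum_{h\in H}c_h\overline{\chi(h,\cdot)}$ converging in $L^2(U^*)$. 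Hence it suffices to show $\sum_{h\in H}c_h\overline{\chi(h,\cdot)}=[\hat f,\hat{\tilde\phi}]$ in $L^2(U^*)$.

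First I would verify that $[\hat f,\hat{\tilde\phi}]\in L^2(U^*)$. By Theorem~\ref{bessel char1}, the Bessel property of $\{T_h\tilde\phi\}_{h\in H}$ gives $[\hat{\tilde\phi},\hat{\tilde\phi}](\omega)\le D$ a.e.\ on $U^*$, so a pointwise Cauchy--Schwarz estimate on the bracket yields $|[\hat f,\hat{\tilde\phi}](\omega)|\le\sqrt D\,[\hat f,\hat f](\omega)^{1/2}$; integrating over $U^*$ and using $\int_{U^*}[\hat f,\hat f](\omega)\,d\omega=\Vert\hat f\Vert_{L^2(G^*)}^2=\Vert f\Vert^2<\infty$ shows $[\hat f,\hat{\tilde\phi}]\in L^2(U^*)$ (in fact also in $L^1(U^*)$). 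Next, by Plancherel together with $\widehat{T_h\tilde\phi}(\omega)=\overline{\chi(h,\omega)}\hat{\tilde\phi}(\omega)$,
\[
c_h=\langle\hat f,\overline{\chi(h,\cdot)}\hat{\tilde\phi}\rangle=\int_{G^*}\hat f(\omega)\overline{\hat{\tilde\phi}(\omega)}\chi(h,\omega)\,d\omega,
\]
and I would fold this integral onto $U^*$ along the cosets of $H^\perp$. Since $\chi(h,\omega\oplus h')=\chi(h,\omega)\chi(h,h')=\chi(h,\omega)$ whenever $h\in H$ and $h'\in H^\perp$, this gives
\[
c_h=\int_{U^*}\Big(\sum_{h'\in H^\perp}\hat f(\omega\oplus h')\overline{\hat{\tilde\phi}(\omega\oplus h')}\Big)\chi(h,\omega)\,d\omega=\langle[\hat f,\hat{\tilde\phi}],\overline{\chi(h,\cdot)}\rangle_{L^2(U^*)}.
\]

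As $\{\overline{\chi(h,\cdot)}\}_{h\in H}$ is an orthonormal basis of $L^2(U^*)$, the last identity says exactly that the $c_h$ are the Fourier coefficients of $[\hat f,\hat{\tilde\phi}]$, so $[\hat f,\hat{\tilde\phi}]=\sum_{h\in H}c_h\overline{\chi(h,\cdot)}$ in $L^2(U^*)$; plugging this back into the formula from Lemma~\ref{conv1} gives $\widehat{\sum_{h\in H}\langle f,T_h\tilde\phi\rangle T_h\phi}(\omega)=\hat\phi(\omega)[\hat f,\hat{\tilde\phi}](\omega)$, as claimed. I expect the only subtle point to be the folding step: interchanging $\sum_{h'\in H^\perp}$ with the integral over $U^*$ has to be justified by Fubini, for which one needs $\sum_{h'\in H^\perp}|\hat f(\cdot\oplus h')\hat{\tilde\phi}(\cdot\oplus h')|\in L^1(U^*)$ — which follows from the same Cauchy--Schwarz bound used above. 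The rest is routine bookkeeping with Plancherel and the $H^\perp$-periodicity of the characters $\chi(h,\cdot)$, $h\in H$.
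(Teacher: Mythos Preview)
Your proof is correct and follows the same route as the paper's: take the Fourier transform term by term (which is Lemma~\ref{conv1}), then identify $\sum_{h\in H}\langle f,T_h\tilde\phi\rangle\,\overline{\chi(h,\cdot)}$ with the bracket $[\hat f,\hat{\tilde\phi}]$. The paper's own proof is essentially a two-line sketch that asserts this identification without justification; your version supplies the missing details (the $L^2(U^*)$-membership of the bracket via Cauchy--Schwarz and Theorem~\ref{bessel char1}, and the folding argument justified by Fubini), so it is strictly more complete than what appears in the paper.
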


\begin{proof}

\begin{align*}
\widehat{\sum_{h \in H}\langle f,T_{h}\tilde{\phi}\rangle T_{h}\phi(\omega)}&=\sum_{h \in H}\langle f,T_{h}\tilde{\phi}\rangle \widehat{T_{h}\phi(\omega)}\\
&=\hat{\phi}(\omega)\sum_{h \in H}\langle f,T_{h}\tilde{\phi}\rangle \overline{\chi(h, \omega)}
\end{align*}
Hence, we get 
 $$\widehat{\sum_{h \in H}\langle f,T_{h}\tilde{\phi}\rangle T_{h}\phi(\omega)}=\hat{\phi}(\omega)[\hat{f},\hat{\tilde{\phi}}](\omega).$$
\end{proof}

\begin{proposition}\label{bessel3}
For $\phi,\tilde{\phi} \in L^{2}(G)$, assume that $\{T_{h}\phi\}_{h \in H}$ and $\{T_{h}\tilde{\phi}\}_{h \in H}$ are Bessel sequences. Then the following are equivalent:
\begin{enumerate}
\item[(i)] $f=\sum_{h \in H}\langle f,T_{h}\tilde{\phi}\rangle T_{h}\phi, \forall f \in \langle \phi \rangle$
\item[(ii)] $\sum_{h \in H^{\perp}}\hat{\phi}(\omega \oplus h)\overline{\hat{\tilde{\phi}}(\omega \oplus h)}=1,$ a.e. on the set $\{\omega:P_{\phi}(\omega) \neq 0\}$.
\end{enumerate}
If the conditions are satisfied, then $\{T_{h}\phi\}_{h \in H}$ is a frame for $\langle \phi \rangle$. 
\end{proposition}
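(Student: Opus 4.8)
The plan is to establish the equivalence (i) $\Leftrightarrow$ (ii) by a direct substitution into Lemma \ref{bessel2}, and then to extract the frame conclusion from Lemma \ref{bessel1}. The structural fact I will use repeatedly is that the bracket $[\hat\phi,\hat{\tilde\phi}]$ is $H^{\perp}$-periodic, being a sum over the $H^{\perp}$-translates of its argument; and since $\{T_{h}\phi\}_{h\in H}$ and $\{T_{h}\tilde\phi\}_{h\in H}$ are Bessel sequences, for every $f\in L^{2}(G)$ the coefficients $(\langle f,T_{h}\tilde\phi\rangle)_{h\in H}$ are square-summable over $H$ and $\sum_{h\in H}\langle f,T_{h}\tilde\phi\rangle T_{h}\phi$ converges in $L^{2}(G)$, so Lemma \ref{bessel2} applies verbatim.

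(i) $\Rightarrow$ (ii). Apply (i) to $f=\phi\in\langle\phi\rangle$ and take Fourier transforms: Lemma \ref{bessel2} gives $\hat\phi(\omega)=\hat\phi(\omega)\,[\hat\phi,\hat{\tilde\phi}](\omega)$ a.e., so the set $N:=\{\omega:\hat\phi(\omega)\neq 0,\ [\hat\phi,\hat{\tilde\phi}](\omega)\neq 1\}$ is $\mu^{*}$-null. Since $[\hat\phi,\hat{\tilde\phi}]$ is $H^{\perp}$-periodic and $\{P_{\phi}\neq 0\}=\bigcup_{h\in H^{\perp}}\big(\{\hat\phi\neq 0\}\oplus h\big)$, the ``bad set'' $\{\omega\in\{P_{\phi}\neq 0\}:[\hat\phi,\hat{\tilde\phi}](\omega)\neq 1\}$ equals $\bigcup_{h\in H^{\perp}}(N\oplus h)$, which is $\mu^{*}$-null because $\mu^{*}$ is translation-invariant and $H^{\perp}$ is countable. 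Hence $[\hat\phi,\hat{\tilde\phi}](\omega)=1$ a.e. on $\{P_{\phi}\neq 0\}$, which is (ii).

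(ii) $\Rightarrow$ (i). Fix $f\in\langle\phi\rangle$ and, by Theorem \ref{charPSI1}, write $\hat f=m\hat\phi$ with $m$ an $H^{\perp}$-periodic function. Using the periodicity of $m$,
\[
[\hat f,\hat{\tilde\phi}](\omega)=\sum_{h\in H^{\perp}}m(\omega\oplus h)\,\hat\phi(\omega\oplus h)\,\overline{\hat{\tilde\phi}(\omega\oplus h)}=m(\omega)\,[\hat\phi,\hat{\tilde\phi}](\omega),
\]
so Lemma \ref{bessel2} yields $\widehat{\sum_{h\in H}\langle f,T_{h}\tilde\phi\rangle T_{h}\phi}(\omega)=\hat\phi(\omega)\,m(\omega)\,[\hat\phi,\hat{\tilde\phi}](\omega)$. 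On the set $\{\hat\phi\neq 0\}$, which is contained in $\{P_{\phi}\neq 0\}$, hypothesis (ii) gives $[\hat\phi,\hat{\tilde\phi}]=1$, so the right-hand side equals $\hat\phi(\omega)m(\omega)=\hat f(\omega)$; on $\{\hat\phi=0\}$ both the right-hand side and $\hat f=m\hat\phi$ vanish. Hence $\widehat{\sum_{h\in H}\langle f,T_{h}\tilde\phi\rangle T_{h}\phi}=\hat f$, and inverting the Fourier transform gives (i).

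Finally, suppose (i) holds. Applying it to each $f=T_{j}\phi\in\langle\phi\rangle$ with $j\in H$ gives $T_{j}\phi=\sum_{h\in H}\langle T_{j}\phi,T_{h}\tilde\phi\rangle T_{h}\phi$; since $\{T_{h}\phi\}_{h\in H}$ and $\{T_{h}\tilde\phi\}_{h\in H}$ are Bessel sequences, Lemma \ref{bessel1} shows $\{T_{h}\phi\}_{h\in H}$ is a frame for $\overline{\text{span}}\{T_{h}\phi:h\in H\}=\langle\phi\rangle$. The only step requiring genuine care is the measure-theoretic passage from $\{\hat\phi\neq 0\}$ to $\{P_{\phi}\neq 0\}$ in (i) $\Rightarrow$ (ii); everything else is routine substitution into Lemma \ref{bessel2} combined with $H^{\perp}$-periodicity.
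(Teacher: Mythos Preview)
Your argument is correct. The direction (i) $\Rightarrow$ (ii) is the paper's argument with the measure-theoretic passage from $\{\hat\phi\neq 0\}$ to $\{P_{\phi}\neq 0\}$ spelled out more explicitly; the paper compresses this to ``for all $h'\in H^{\perp}$, $\hat\phi(\omega\oplus h')=\hat\phi(\omega\oplus h')[\hat\phi,\hat{\tilde\phi}](\omega)$'', which is the same periodicity step.

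For (ii) $\Rightarrow$ (i) you take a slightly different route: you invoke Theorem \ref{charPSI1} to write an arbitrary $f\in\langle\phi\rangle$ as $\hat f=m\hat\phi$ and verify the reconstruction identity directly for every such $f$, whereas the paper checks it only for the generators $f=T_{h'}\phi$ (where the computation is immediate since $\widehat{T_{h'}\phi}=\overline{\chi(h',\cdot)}\hat\phi$) and then lets Lemma \ref{bessel1} propagate the identity to all of $\langle\phi\rangle$. Your approach is a bit more self-contained and yields (i) without an auxiliary density/continuity step; the paper's approach avoids appealing to the structural characterization of $\langle\phi\rangle$ and gets the frame conclusion for free in the same breath. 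Both are short and standard.
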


\begin{proof}
Assume that (i) holds. Then by Lemma \ref{bessel2} with $f=\phi$, 
$$\hat{\phi}(\omega)=\hat{\phi}(\omega) \sum_{h \in H^{\perp}}\hat{\phi}(\omega \oplus h)\overline{\hat{\tilde{\phi}}(\omega \oplus h)}.$$
Then for all $h' \in H^{\perp}$, 
$$\hat{\phi}(\omega \oplus h')=\hat{\phi}(\omega \oplus h') \sum_{h \in H^{\perp}}\hat{\phi}(\omega \oplus h)\overline{\hat{\tilde{\phi}}(\omega \oplus h)}.$$
Hence (ii) follows.
 
Now, assume that (ii) holds then by Lemma \ref{bessel2}, for any $h' \in H$ and $\omega$ such that $P_{\phi}(\omega) \neq 0$
\begin{align*}
\widehat{\sum_{h \in H}\langle T_{h'}\phi, T_{h}\tilde{\phi}\rangle T_{h}\phi(\omega)}
&=\hat{\phi}(\omega)\sum_{h^{*} \in H^{\perp}}\widehat{T_{h'}\phi}(\omega \oplus h^{*})\overline{\hat{\tilde{\phi}}(\omega \oplus h^{*})}\\
&=\hat\phi(\omega)\sum_{h^{*} \in H^{\perp}}\hat{\phi}(\omega \oplus h^{*})\overline{\hat{\tilde{\phi}}(\omega \oplus h^{*})}\overline{\chi(h',\omega)}\\
&=\widehat{T_{h'}\phi}(\omega)
\end{align*}
If $P_{\phi}(\omega)=0$, then the above holds trivially. Thus (i) follows by Lemma \ref{bessel1}.
\end{proof}

Let $I$ be a countable index set. For a given sequence of functions $\{\phi_{l}\}_{l \in I}$ in $L^{2}(G)$, let $\mathbb{W}=\overline{\text{span}}\{T_{h}\phi_{l}\}_{h \in H, l \in I}$.

\begin{theorem}\label{bessel4}
Let $\{\phi_{l}\}_{l \in I}, \{\tilde{\phi}_{l}\}_{l \in I} \subseteq L^{2}(G)$. Assume that $\{T_{h}\phi_{l}\}_{h \in H, l \in I}$ and $\{T_{h}\tilde{\phi}_{l}\}_{h \in H, l \in I}$ are Bessel sequences. Then the following are equivalent:
\begin{itemize}
\item[(i)] $f=\sum_{l \in I}\sum_{h \in H}\langle f,T_{h}\tilde{\phi}_{l}\rangle T_{h}\phi_{l}, \forall f \in \mathbb{W}$.
\item[(ii)] For all $n \in I$, 
  $$\hat{\phi}_{n}(\omega)=\sum_{l \in I}\hat{\phi}_{l}(\omega)(\sum_{h \in H^{\perp}}\hat{\phi}_{n}(\omega \oplus h)\overline{\hat{\tilde{\phi}}_{l}(\omega \oplus h)}),$$
\end{itemize}
holds for a.e. $\omega$.  
\end{theorem}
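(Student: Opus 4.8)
The plan is to reduce the whole statement to the single-generator case of Proposition \ref{bessel3} by combining a multi-generator version of Lemma \ref{bessel2} with the abstract dual-frame criterion of Lemma \ref{bessel1}. The key preliminary step is to show that, under the stated Bessel hypotheses, for every $f \in L^{2}(G)$
\[
\widehat{\sum_{l \in I}\sum_{h \in H}\langle f,T_{h}\tilde{\phi}_{l}\rangle T_{h}\phi_{l}}(\omega)=\sum_{l \in I}\hat{\phi}_{l}(\omega)\,[\hat{f},\hat{\tilde{\phi}}_{l}](\omega).
\]
This is obtained by writing the left-hand side as $\sum_{l \in I}\widehat{\sum_{h \in H}\langle f,T_{h}\tilde{\phi}_{l}\rangle T_{h}\phi_{l}}$, applying Lemma \ref{bessel2} to each inner sum, and interchanging the order of summation. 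The interchange and the simultaneous $L^{2}$-convergence of the double series are the only genuinely technical points; they are controlled by the Bessel bounds of the two families (so that $(\langle f,T_{h}\tilde{\phi}_{l}\rangle)_{h\in H,l\in I}\in \ell^{2}$ and the associated synthesis map for $\{T_{h}\phi_{l}\}$ is bounded) together with Lemma \ref{conv1}.

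For the implication (i) $\Rightarrow$ (ii), I would apply (i) to $f=\phi_{n}$, which is legitimate since $\phi_{n}\in\mathbb{W}$, and then take Fourier transforms. By the displayed identity and the interpretation $[\hat{f},\hat{g}](\omega)=\sum_{h\in H^{\perp}}\hat{f}(\omega\oplus h)\overline{\hat{g}(\omega\oplus h)}$ coming from Remark 2 and the relation $P_{\psi}=[\hat{\psi},\hat{\psi}]$, this yields exactly
\[
\hat{\phi}_{n}(\omega)=\sum_{l\in I}\hat{\phi}_{l}(\omega)\Big(\sum_{h\in H^{\perp}}\hat{\phi}_{n}(\omega\oplus h)\overline{\hat{\tilde{\phi}}_{l}(\omega\oplus h)}\Big)\quad\text{a.e.},
\]
which is (ii).

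For the converse, (ii) $\Rightarrow$ (i), I would first enumerate the countable index set $H\times I$ and apply Lemma \ref{bessel1} to the families $\{T_{h}\phi_{l}\}_{h\in H,l\in I}$ and $\{T_{h}\tilde{\phi}_{l}\}_{h\in H,l\in I}$; it then suffices to verify the reproducing identity on the generators, namely $T_{h'}\phi_{n}=\sum_{l\in I}\sum_{h\in H}\langle T_{h'}\phi_{n},T_{h}\tilde{\phi}_{l}\rangle T_{h}\phi_{l}$ for all $h'\in H$ and $n\in I$, since Lemma \ref{bessel1} automatically propagates it to every $f\in\overline{\text{span}}\{T_{h}\phi_{l}\}=\mathbb{W}$. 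To check the generator identity I apply the multi-generator formula above with $f=T_{h'}\phi_{n}$, use $\widehat{T_{h'}\phi_{n}}=\overline{\chi(h',\cdot)}\hat{\phi}_{n}$ together with the $H^{\perp}$-periodicity of $\chi(h',\cdot)$ to pull the factor $\overline{\chi(h',\omega)}$ out of the bracket, and then invoke hypothesis (ii) to collapse $\sum_{l\in I}\hat{\phi}_{l}(\omega)\sum_{h\in H^{\perp}}\hat{\phi}_{n}(\omega\oplus h)\overline{\hat{\tilde{\phi}}_{l}(\omega\oplus h)}$ to $\hat{\phi}_{n}(\omega)$. This gives $\widehat{\sum_{l,h}\langle T_{h'}\phi_{n},T_{h}\tilde{\phi}_{l}\rangle T_{h}\phi_{l}}=\overline{\chi(h',\cdot)}\hat{\phi}_{n}=\widehat{T_{h'}\phi_{n}}$, and an inverse Fourier transform completes the argument.

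I expect the main obstacle to be the preliminary step: rigorously justifying the interchange of the sums over $l\in I$ and $h\in H$ and the joint $L^{2}$-convergence of $\sum_{l}\sum_{h}\langle f,T_{h}\tilde{\phi}_{l}\rangle T_{h}\phi_{l}$ and of $\sum_{l}\hat{\phi}_{l}\,[\hat{f},\hat{\tilde{\phi}}_{l}]$ in $L^{2}(G^{*})$. Once that identity is in place, both implications are short computations, with (ii) $\Rightarrow$ (i) additionally requiring only the bookkeeping needed to apply Lemma \ref{bessel1} over the product index set $H\times I$.
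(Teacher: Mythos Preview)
Your proposal is correct and follows essentially the same route as the paper: both directions hinge on the multi-generator extension of Lemma \ref{bessel2}, with (i)$\Rightarrow$(ii) obtained by specializing to a generator and (ii)$\Rightarrow$(i) by verifying the reproducing identity on the generators $T_{h'}\phi_{n}$ and then invoking Lemma \ref{bessel1}. The only cosmetic difference is that the paper tests (i) on $f=T_{h}\phi_{n}$ and cancels the common factor $\overline{\chi(h,\omega)}$, whereas you take $f=\phi_{n}$ directly; your treatment of the interchange-of-sums and $L^{2}$-convergence issues is in fact more explicit than the paper's.
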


\begin{proof}
Assume that $(i)$ holds. Let $f=T_{h}\phi_{n}, h \in H, n \in I$, 

By $(i)$ and Lemma \ref{bessel2},
\begin{align*}
\hat{f}(\omega)=\widehat{T_{h}\phi_{n}(\omega)}&=\widehat{\sum_{l \in I}\sum_{h' \in H}\langle T_{h}\phi_{n},T_{h'}\tilde{\phi}_{l}\rangle T_{h'}\phi_{l}}(\omega)\\
&=\sum_{l \in I}\hat{\phi}_{l}(\omega) \sum_{h' \in H^{\perp}}\widehat{T_{h}\phi_{n}}(\omega \oplus h')\overline{\hat{\tilde{\phi}}_{l}(\omega \oplus h')}
\end{align*}

\begin{align*}
\hat{\phi}_{n}(\omega)\overline{\chi(h,\omega)}&=\sum_{l \in I}\hat{\phi}_{l}(\omega)\sum_{h' \in H^{\perp}}\hat{\phi}_{n}(\omega \oplus h')\overline{\chi(h,\omega \oplus h')}\overline{\hat{\tilde{\phi}}_{l}(\omega \oplus h')}\\
\text{Then },
\hat{\phi}_{n}(\omega)&=\sum_{l \in I}\hat{\phi}_{l}(\omega)\sum_{h' \in H^{\perp}}\hat{\phi}_{n}(\omega \oplus h')\overline{\hat{\tilde{\phi}}_{l}(\omega \oplus h')}
\end{align*}
This proves $(ii)$. On the other hand, if $(ii)$ holds then $(i)$ holds  for all $\{T_{h}\phi_{n}\}_{h \in H, n \in I}$ and hence for all $f \in \mathbb{W}$ by Lemma \ref{bessel1}. 
\end{proof}

\noindent \textbf{Note.} If the above conditions are satisfied, then $\{T_{h}\phi_{l}\}_{h \in H, l \in I}$ and $\{T_{h}\tilde{\phi}_{l}\}_{h \in H, l \in I}$ are called \emph{generalized dual frames}.

\begin{corollary}\label{510}
Let $\{\phi_{l}\}_{l \in I}, \{\tilde{\phi}_{l}\}_{l \in I} \subseteq L^{2}(G)$. Assume that $\{T_{h}\phi_{l}\}_{h \in H, l \in I}$ and $\{T_{h}\tilde{\phi}_{l}\}_{h \in H, l \in I}$ are Bessel sequences and the series 
\begin{equation}\label{54}
 \sum_{l \in I}\sum_{h \in H^{\perp}}\hat{\phi}_{l}(\omega)\hat{\phi}_{n}(\omega \oplus h)\overline{\hat{\tilde{\phi}}_{l}(\omega \oplus h)}
\end{equation}
converges unconditionally for all $n \in I$. Then the following are equivalent:
\begin{itemize}
\item[(i)] $f=\sum_{l \in I}\sum_{h \in H}\langle f,T_{h}\tilde{\phi}_{l}\rangle T_{h}\phi_{l}, \forall f \in \mathbb{W}$.
\item[(ii)] For all $n \in I$,  
 \begin{equation}\label{55}
  \hat{\phi}_{n}(\omega)=\sum_{h \in H^{\perp}}(\sum_{l \in I}\hat{\phi}_{l}(\omega)\overline{\hat{\tilde{\phi}}_{l}(\omega \oplus h)})\hat{\phi}_{n}(\omega \oplus h), \text{ holds for a.e. }\omega.
 \end{equation}
\end{itemize}
Condition $(ii)$ holds in particular, if 
\begin{equation}\label{56}
\sum_{l \in I}\hat{\phi}_{l}(\omega)\overline{\hat{\tilde{\phi}}_{l}(\omega \oplus h)}=\delta_{h,0} \text{ a.e. on } \{\omega:\sum_{l \in I}\vert\hat{\phi}_{l}(\omega)\vert^{2} \neq 0\}.
\end{equation}
\end{corollary}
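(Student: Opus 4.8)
The plan is to deduce the corollary directly from Theorem \ref{bessel4}, using a Fubini-type interchange in the order of summation that is precisely what the unconditional-convergence hypothesis buys us, and then to check the ``in particular'' clause by a pointwise case distinction.

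First I would recall Theorem \ref{bessel4}: under the standing Bessel hypotheses on $\{T_{h}\phi_{l}\}_{h\in H,l\in I}$ and $\{T_{h}\tilde{\phi}_{l}\}_{h\in H,l\in I}$, condition $(i)$ is equivalent to the statement that, for every $n\in I$ and a.e.\ $\omega$,
\begin{equation*}
\hat{\phi}_{n}(\omega)=\sum_{l\in I}\hat{\phi}_{l}(\omega)\Bigl(\sum_{h\in H^{\perp}}\hat{\phi}_{n}(\omega\oplus h)\overline{\hat{\tilde{\phi}}_{l}(\omega\oplus h)}\Bigr).
\end{equation*}
Call this relation $(\ast)$. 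It differs from \eqref{55} only in the order in which the iterated sum over $l\in I$ and over $h\in H^{\perp}$ is carried out: $(\ast)$ has $l$ outside and $h$ inside, whereas \eqref{55} has $h$ outside and $l$ inside. So the equivalence $(i)\Leftrightarrow(ii)$ reduces to showing that these two iterated sums agree for a.e.\ $\omega$.

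Next I would invoke the hypothesis that, for each $n\in I$, the double series \eqref{54} converges unconditionally for a.e.\ $\omega$. For a family of complex scalars, unconditional convergence is equivalent to absolute convergence, and an absolutely summable family indexed by the countable set $I\times H^{\perp}$ may be summed in any order and regrouped arbitrarily with the same total — this is just Fubini's theorem for the counting measure on $I\times H^{\perp}$. Applying this to the terms $a_{l,h}(\omega)=\hat{\phi}_{l}(\omega)\hat{\phi}_{n}(\omega\oplus h)\overline{\hat{\tilde{\phi}}_{l}(\omega\oplus h)}$ shows that the right-hand side of $(\ast)$ coincides with the right-hand side of \eqref{55} for a.e.\ $\omega$. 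Hence $(\ast)\Leftrightarrow\eqref{55}$, and combining with the previous paragraph yields $(i)\Leftrightarrow(ii)$.

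For the last assertion I would argue pointwise. Put $E=\{\omega:\sum_{l\in I}|\hat{\phi}_{l}(\omega)|^{2}\neq 0\}$ and assume $\sum_{l\in I}\hat{\phi}_{l}(\omega)\overline{\hat{\tilde{\phi}}_{l}(\omega\oplus h)}=\delta_{h,0}$ a.e.\ on $E$ (for every $h\in H^{\perp}$; the exceptional null set is a countable union over $h$, hence null). If $\omega\in E$, the inner sum in \eqref{55} equals $\delta_{h,0}$, so the outer sum over $h\in H^{\perp}$ collapses to the single term $h=0$ and gives $\hat{\phi}_{n}(\omega)$, i.e.\ \eqref{55} holds. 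If $\omega\notin E$, then $\hat{\phi}_{l}(\omega)=0$ for all $l\in I$, so every inner sum $\sum_{l}\hat{\phi}_{l}(\omega)\overline{\hat{\tilde{\phi}}_{l}(\omega\oplus h)}$ vanishes and the right-hand side of \eqref{55} is $0$, while the left-hand side is $\hat{\phi}_{n}(\omega)=0$ as well. Thus \eqref{55} holds a.e., so $(ii)$ — and hence $(i)$ — follows. I expect the only genuinely delicate point to be the justification of the interchange of the summation order: one must make sure the unconditional (equivalently, absolute) convergence of \eqref{54} is exactly what licenses passing between the ``$l$ outside, $h$ inside'' form produced by Theorem \ref{bessel4} and the ``$h$ outside, $l$ inside'' form of \eqref{55}, uniformly in a.e.\ $\omega$; the remaining steps are routine.
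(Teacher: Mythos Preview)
Your proposal is correct and follows essentially the same approach as the paper: reduce to Theorem \ref{bessel4} and observe that \eqref{54} and the right-hand side of Theorem \ref{bessel4}(ii) differ only by a reordering, which the unconditional-convergence hypothesis justifies, and then verify the ``in particular'' clause by the same pointwise case split on whether $\sum_{l}|\hat{\phi}_{l}(\omega)|^{2}$ vanishes. Your write-up is simply more explicit about why the interchange of summations is legitimate.
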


\begin{proof}
The series in \eqref{54} is a reordering of the series on the right-hand side of $(ii)$ in Theorem \ref{bessel4}; this leads to the first part of the conclusion. We now prove that \eqref{55} holds for all $n \in I$, 
if \eqref{56} holds; in fact 
\begin{align*}
 \sum_{h \in H^{\perp}}(\sum_{l \in I}\hat{\phi}_{l}(\omega)\overline{\hat{\tilde{\phi}}_{l}(\omega \oplus h)})\hat{\phi}_{n}(\omega \oplus h)
 &=\left\{\begin{matrix}
 \hat{\phi}_{n}(\omega), & \text{ if } \sum_{l \in I}\vert\hat{\phi}_{l}(\omega)\vert^{2}\neq 0\\ 
 0, & \text{ if } \sum_{l \in I}\vert\hat{\phi}_{l}(\omega)\vert^{2}=0
\end{matrix}\right.\\
&=\hat{\phi}_{n}(\omega)
\end{align*}
\end{proof}

\begin{lem}
Assume that $\{T_{h}\phi_{l}\}_{h \in H, l \in I}$ and $\{T_{h}\tilde{\phi}_{l}\}_{h \in H, l \in I}$ are Bessel sequences and that $\sum_{l \in I}\vert\hat{\phi}_{l}(\omega)\vert<\infty$ a.e. Then the series in $\eqref{54}$ converges unconditionally for a.e. $\omega$. 
\end{lem}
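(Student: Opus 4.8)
The plan is to reduce unconditional convergence of the scalar double series \eqref{54} to absolute convergence, and then to dominate the resulting sum of moduli by a product of periodization factors that are controlled by the Bessel bounds and by the hypothesis $\sum_{l\in I}|\hat{\phi}_{l}(\omega)|<\infty$.

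First I would fix $n\in I$ and a point $\omega$ lying outside a suitable null set, and observe that a series of complex numbers converges unconditionally precisely when it converges absolutely, so it suffices to show
$$\sum_{l\in I}\sum_{h\in H^{\perp}}|\hat{\phi}_{l}(\omega)|\,|\hat{\phi}_{n}(\omega\oplus h)|\,|\hat{\tilde{\phi}}_{l}(\omega\oplus h)|<\infty .$$
The natural move is to factor $|\hat{\phi}_{l}(\omega)|$ out of the inner sum over $h$ and apply the Cauchy--Schwarz inequality in $l^{2}(H^{\perp})$ to the pair $\big(\hat{\phi}_{n}(\omega\oplus h)\big)_{h}$ and $\big(\hat{\tilde{\phi}}_{l}(\omega\oplus h)\big)_{h}$, which bounds $\sum_{h\in H^{\perp}}|\hat{\phi}_{n}(\omega\oplus h)|\,|\hat{\tilde{\phi}}_{l}(\omega\oplus h)|$ by $P_{\phi_{n}}(\omega)^{1/2}P_{\tilde{\phi}_{l}}(\omega)^{1/2}$. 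This converts the double sum into $P_{\phi_{n}}(\omega)^{1/2}\sum_{l\in I}|\hat{\phi}_{l}(\omega)|\,P_{\tilde{\phi}_{l}}(\omega)^{1/2}$.

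Second I would control the periodizations via Theorem \ref{bessel char1}. Since $\{T_{h}\phi_{n}\}_{h\in H}$ is a Bessel sequence, say with bound $D_{n}$, we have $P_{\phi_{n}}=[\hat{\phi}_{n},\hat{\phi}_{n}]\le D_{n}$ a.e.\ on $U^{*}$, hence a.e.\ on $G^{*}$ by $H^{\perp}$-periodicity. The crucial point is that for each fixed $l$ the subfamily $\{T_{h}\tilde{\phi}_{l}\}_{h\in H}$ inherits the Bessel bound $\tilde{D}$ of the full family $\{T_{h}\tilde{\phi}_{l}\}_{h\in H,l\in I}$, so $P_{\tilde{\phi}_{l}}(\omega)\le\tilde{D}$ a.e.\ with a bound independent of $l$. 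Removing the countably many null sets on which these estimates may fail (admissible because $I$ is countable) yields
$$\sum_{l\in I}\sum_{h\in H^{\perp}}|\hat{\phi}_{l}(\omega)|\,|\hat{\phi}_{n}(\omega\oplus h)|\,|\hat{\tilde{\phi}}_{l}(\omega\oplus h)|\le D_{n}^{1/2}\tilde{D}^{1/2}\sum_{l\in I}|\hat{\phi}_{l}(\omega)|,$$
which is finite for a.e.\ $\omega$ by hypothesis; this gives absolute, hence unconditional, convergence of \eqref{54} for a.e.\ $\omega$.

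The argument is essentially routine once organized in this way; the step I would flag as the main obstacle is securing the \emph{uniformity in} $l$ of the bound on $P_{\tilde{\phi}_{l}}$ (it is what allows the single factor $\sum_{l}|\hat{\phi}_{l}(\omega)|$ to close the estimate) together with the bookkeeping ensuring that all the ``a.e.'' statements hold simultaneously off one null set, for which countability of $I$ is used.
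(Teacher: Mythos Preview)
Your proposal is correct and follows essentially the same route as the paper: factor out $|\hat{\phi}_{l}(\omega)|$, apply Cauchy--Schwarz in $l^{2}(H^{\perp})$ to the inner sum, and bound the two periodizations $P_{\phi_{n}}$ and $P_{\tilde{\phi}_{l}}$ uniformly via the Bessel property, leaving $C\sum_{l}|\hat{\phi}_{l}(\omega)|$. The only cosmetic difference is that the paper uses the single Bessel bound $D$ of the full family $\{T_{h}\phi_{l}\}_{h,l}$ for $P_{\phi_{n}}$ as well (so $D_{n}=D$), rather than an $n$-dependent constant; since $n$ is fixed this changes nothing.
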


\begin{proof}
If $D$ denotes a bound for the Bessel sequence $\{T_{h}\phi_{l}\}_{h \in H,l \in I}$, then $D$ is also a bound for $\{T_{h}\phi_{l}\}_{h \in H}$, for each $l \in I$. Then $\forall n$,
$$\sum_{h \in H^{\bot}}\vert\hat{\phi}_{n}(\omega \oplus h)\vert^{2} \leq D \quad \quad \text{ a.e. }$$
\begin{align*}
\sum_{l \in I}\sum_{h \in H^{\perp}}\vert\hat{\phi}_{l}(\omega)\hat{\phi}_{n}(\omega \oplus h)\overline{\hat{\tilde{\phi}}_{l}(\omega \oplus h)}\vert &=\sum_{l \in I}\vert\hat{\phi}_{l}(\omega)\vert\sum_{h \in H^{\perp}}\vert\hat{\phi}_{n}(\omega \oplus h)\overline{\hat{\tilde{\phi}}_{l}(\omega \oplus h)}\vert\\
&\leq \sum_{l \in I}\vert\hat{\phi}_{l}(\omega)\vert(\sum_{h \in H^{\perp}}\vert\hat{\phi}_{n}(\omega \oplus h)\vert^{2})^{\frac{1}{2}}(\sum_{h \in H^{\perp}}\vert \hat{\tilde{\phi}}_{l}(\omega \oplus h)\vert^{2})^{\frac{1}{2}}\\
&\leq C\sum_{l \in I}\vert\hat{\phi}_{l}(\omega)\vert, 
\end{align*}
where the constant $C$ depends on the bounds of $\{T_{h}\phi_{l}\}_{h \in H, l \in I}$ and $\{T_{h}\tilde{\phi}_{l}\}_{h \in H, l \in I}$. Thus the series \eqref{54} converges unconditionally if $\sum_{l \in I}\vert\hat{\phi}_{l}(\omega)\vert<\infty$ a.e.
\end{proof}

For $l \in \mathbb Z$, let $\phi_{l}(\omega)=D^{l}\phi(\omega)$ and $\tilde{\phi}_{l}(\omega)=D^{l}\tilde{\phi}(\omega)$, condition \eqref{55} can be written as 
\begin{equation}\label{57}
\hat{\phi}(B^{-n}\omega)=\sum_{h \in H^{\bot}}(\sum_{l \in \mathbb{Z}}p^{-l}\hat{\phi}(B^{-l}\omega)\overline{\hat{\tilde{\phi}}(B^{-l}(\omega \oplus h))})\hat{\phi}(B^{-n}(\omega \oplus h)) \text{ for a.e. }\omega
\end{equation}

\begin{proposition}
The condition \eqref{57} is satisfied for all $n \in \mathbb{Z}$ if and only if  
\begin{equation}\label{58}
\hat{\phi}(\omega)=\sum_{h \in H^{\perp}}(\sum_{m \in \mathbb{Z}}p^{-(m+n)}\hat{\phi}(B^{-m}\omega)\overline{\hat{\tilde{\phi}}(B^{-m}(\omega \oplus B^{-n}h))})\hat{\phi}(\omega \oplus B^{-n}h) \text{ for a.e. }\omega
\end{equation}
In particular, this condition is satisfied, if
$$\sum_{l \in \mathbb{Z}}p^{-l}\hat{\phi}(B^{-l}\omega)\overline{\hat{\tilde{\phi}}(B^{-l}(\omega \oplus B^{-n}h)}=p^n\delta_{h,0}, \text{ a.e.  on the set }\{\omega:\sum_{l \in \mathbb{Z}}\vert\hat{\phi}(B^{-l}\omega)\vert^{2} \neq 0\}.$$
\end{proposition}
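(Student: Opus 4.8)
The plan is to derive \eqref{58} from \eqref{57} by a single change of variables $\omega \mapsto B^{n}\omega$ and to observe that, the substitution being invertible, the two conditions are in fact equivalent for each fixed $n$ (and hence simultaneously for all $n \in \mathbb{Z}$). I would first record two elementary facts that make the manipulation legitimate. Since $B \in \text{Aut}\, G^{*}$, the map $\omega \mapsto B^{n}\omega$ is a measurable bijection of $G^{*}$ whose inverse is $\omega \mapsto B^{-n}\omega$, and the push-forward of $\mu^{*}$ under it is again a Haar measure on $G^{*}$, hence a positive multiple of $\mu^{*}$; consequently $\mu^{*}$-null sets are preserved and every "for a.e.\ $\omega$" assertion is unaffected by the substitution. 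Second, $B$ is a group automorphism, so $B^{-k}(\omega \oplus h) = B^{-k}\omega \oplus B^{-k}h$ for every $k \in \mathbb{Z}$ and $h \in H^{\perp}$.

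Fixing $n \in \mathbb{Z}$, I replace $\omega$ by $B^{n}\omega$ in \eqref{57}. The left-hand side becomes $\hat{\phi}(B^{-n}B^{n}\omega) = \hat{\phi}(\omega)$. On the right-hand side the homomorphism identity gives $B^{-l}(B^{n}\omega \oplus h) = B^{n-l}\omega \oplus B^{-l}h$ and $B^{-n}(B^{n}\omega \oplus h) = \omega \oplus B^{-n}h$, so \eqref{57} becomes
$$\hat{\phi}(\omega) = \sum_{h \in H^{\perp}}\Big(\sum_{l \in \mathbb{Z}} p^{-l}\,\hat{\phi}(B^{n-l}\omega)\,\overline{\hat{\tilde{\phi}}(B^{n-l}\omega \oplus B^{-l}h)}\Big)\hat{\phi}(\omega \oplus B^{-n}h).$$
Then I reindex the inner bilateral sum by $l = m+n$; this is a bijection of $\mathbb{Z}$, and by the assumed unconditional convergence of the series in \eqref{54} the value of the sum is unchanged. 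Since $p^{-l} = p^{-(m+n)}$, $\hat{\phi}(B^{n-l}\omega) = \hat{\phi}(B^{-m}\omega)$, and $B^{n-l}\omega \oplus B^{-l}h = B^{-m}\omega \oplus B^{-m}(B^{-n}h) = B^{-m}(\omega \oplus B^{-n}h)$, the last display is exactly \eqref{58}. As replacing $\omega$ by $B^{-n}\omega$ reverses each step, \eqref{57} holds a.e.\ if and only if \eqref{58} holds a.e., for each $n$ separately; note that the outer sum over $h \in H^{\perp}$ is literally the same on both sides and is not reindexed (indeed $B^{-n}h$ need not lie in $H^{\perp}$), only the inner sum is shifted.

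For the final ("in particular") assertion I would substitute the hypothesis $\sum_{l \in \mathbb{Z}} p^{-l}\hat{\phi}(B^{-l}\omega)\overline{\hat{\tilde{\phi}}(B^{-l}(\omega \oplus B^{-n}h))} = p^{n}\delta_{h,0}$ into the inner sum of \eqref{58}: on the set $\{\omega : \sum_{l \in \mathbb{Z}}|\hat{\phi}(B^{-l}\omega)|^{2} \neq 0\}$ the coefficient of $\hat{\phi}(\omega \oplus B^{-n}h)$ equals $p^{-n}\cdot p^{n}\delta_{h,0} = \delta_{h,0}$, so the right-hand side of \eqref{58} collapses to $\hat{\phi}(\omega)$; off that set $\hat{\phi}(B^{-m}\omega) = 0$ for every $m$ (take $m = 0$ on the left), so both sides of \eqref{58} vanish. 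Hence \eqref{58} holds a.e. The step I expect to need the most care is the bookkeeping of how $B^{-n}$ interacts with $\oplus$ and with the translation index $h$ during the change of variables, together with the justification — via quasi-invariance of $\mu^{*}$ under $\text{Aut}\, G^{*}$ and the standing unconditional-convergence hypothesis — that the reindexing of the inner series is valid for a.e.\ $\omega$; the rest is formal.
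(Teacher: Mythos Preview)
Your proposal is correct and follows essentially the same route as the paper: substitute $\omega \mapsto B^{n}\omega$ in \eqref{57}, then reindex the inner sum via $l = m+n$ to obtain \eqref{58}, and handle the ``in particular'' clause by direct substitution (the paper simply cites Corollary~\ref{510} here). Your write-up is in fact more careful than the paper's, explicitly justifying preservation of null sets under $B^{n}$, invoking unconditional convergence for the reindexing, and noting the reversibility of the substitution for the ``if and only if'' direction---points the paper leaves implicit.
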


\begin{proof}
From the equation \eqref{57}, 
\begin{align*}
\hat{\phi}(\omega)&=\sum_{h \in H^{\perp}}(\sum_{l \in \mathbb{Z}}p^{-l}\hat{\phi}(B^{-(l-n)}\omega)\overline{\hat{\tilde{\phi}}(B^{-(l-n)}(\omega \oplus B^{-n}h))})\hat{\phi}(\omega \oplus B^{-n}h)\\
&=\sum_{h \in H^{\perp}}(\sum_{m \in \mathbb{Z}}p^{-(m+n)}\hat{\phi}(B^{-m}\omega)\overline{\hat{\tilde{\phi}}(B^{-m}(\omega \oplus B^{-n}h))})\hat{\phi}(\omega \oplus B^{-n}h) 
\end{align*}
Hence \eqref{58} holds and similar to corollary \ref{510}, we get the last equation.
\end{proof}

\end{document}